\newtheorem{theorem}{Theorem}[section]
\newtheorem{corollary}{Corollary}[section]
\newtheorem{lemma}[theorem]{Lemma}
\theoremstyle{definition}
\theoremstyle{remark}
\newtheorem{remark}[theorem]{Remark}
\numberwithin{equation}{section}
\begin{document}


\title{Knotted toroidal sets, attractors and incompressible surfaces}


\author{H\'ector Barge}
\address{E.T.S. Ingenieros Inform\'aticos, Universidad Polit\'ecnica de Madrid, 28660 Madrid, Espa\~na}
\curraddr{}
\email{h.barge@upm.es}
\thanks{The authors are partially supported by the Spanish Ministerio de Ciencia e Innovaci\'on (grants PGC2018-098321-B-I00 and PID2021-126124NB-I00)}

\author{J.J. S\'anchez-Gabites}
\address{Facultad de Ciencias Matem\'aticas, Universidad Complutense de Madrid, 28040 Madrid, Espa\~na}
\curraddr{}
\email{jajsanch@ucm.es}
\thanks{The second author is funded by the Ram\'on y Cajal programme (RYC2018-025843-I)}

\subjclass[2020]{Primary 57K99, 37B35, 37E99; Secondary 55N99}
\keywords{Attractor, toroidal set, incompressible surface}
\date{}

\begin{abstract}
In this paper we give a complete characterization of those knotted toroidal sets that can be realized as attractors for both discrete and continuous dynamical systems globally defined in $\mathbb{R}^3$. We also see that the techniques used to solve this problem can be used to give sufficient conditions to ensure that a wide class of subcompacta of $\mathbb{R}^3$ that are attractors for homeomorphisms must also be attractors for flows. In addition we study certain attractor-repeller decompositions of $\mathbb{S}^3$ which arise naturally when considering toroidal sets.  
\end{abstract}

\maketitle

\section{Introduction}

Throughout this paper we consider dynamical systems which are either discrete (generated by iteration of a homeomorphism) or continuous (given by a continuous flow; no smoothness assumptions are made). We shall focus on attractors; i.e. compact invariant sets which are stable in the sense of Lyapunov and attract nearby points.

It is well known that attractors for dynamical systems might exhibit a very complicated topological structure, and one may wonder whether this topological complexity is essentially arbitrary. More formally, one poses the following realization problem: characterize topologically which compacta $K$ in some given phase space can be realized as attractors for a dynamical system. Actually this encompasses a broad family of problems depending on what sort of dynamics and phase space one considers; one could even replace attractors by more general invariant sets. Thus in the literature one finds realization problems of attractors for topological flows (\cite{garay1}, \cite{gunthersegal1}, \cite{mio5}, \cite{sanjurjo1}), for analytic flows (\cite{jimenezllibre1}, \cite{peraltajimenez1}), critical sets rather than attractors (\cite{graysonpugh1}, \cite{nortonpugh1}, \cite{Souto}), attractors for homeomorphisms (\cite{gunther1}, \cite{kato1}, \cite{ortegayo1}, \cite{mio6}, \cite{mio7}), or attractors for iterated function systems instead of single homeomorphisms (\cite{crovisierrams1}, \cite{duvallhusch1}).

The realization problem is relatively easy to solve in $\mathbb{R}^2$: among all compacta, attractors (either for homeomorphisms or flows) are characterized by having a finitely generated \v{C}ech cohomology. The situation in $\mathbb{R}^3$ is much more complicated. Unlike the $2$--dimensional case, now whether a compactum $K \subseteq \mathbb{R}^3$ can be realized as an attractor is related not only to topological properties of $K$ itself (such as its cohomology, as before), but crucially also to the way it sits in $\mathbb{R}^3$. This is illustrated by the fact that the round $3$--ball can trivially be realized as an attractor but it can also be embedded in a different way (for instance, as Alexander's horned ball) which cannot be realized as an attractor. The same phenomenon occurs with many other simple sets, such as arcs or spheres (\cite{mio6}).

In $\mathbb{R}^3$ (unlike $\mathbb{R}^2$, again) being an attractor for a flow and for a homeomorphism are no longer equivalent; for instance the dyadic solenoid is an attractor for a homeomorphism but not for a flow. A topological characterization of compacta in $\mathbb{R}^3$ which can be realized as attractors for flows is known (\cite{gunthersegal1}, \cite{mio5}). For discrete dynamics the realization problem seems to be very difficult and is wide open, although partial non-realizability results are known. In order to make some progress it is then sensible to focus on particular classes of compacta $K$. 

A compact set $K \subseteq \mathbb{R}^3$ is called cellular if it has a neighbourhood basis comprised of $3$--cells (i.e. handlebodies of genus zero). The dynamical properties of cellular sets (in arbitrary dimensions) were studied by Garay \cite{garay1}, who showed that all of them can be realized as attractors of flows. Toroidal sets were introduced in \cite{hecyo1} as a natural next step in complexity. A compactum $K \subseteq \mathbb{R}^3$ is called toroidal if it is not cellular and has a neighbourhood basis comprised of solid tori (i.e. handlebodies of genus one). For example, solenoids, generalized solenoids, knotted solenoids, tame knots and some wild knots constructed by summing infinitely many knots are toroidal sets. From a dynamical perspective, any attractor which has a trapping region that is a solid torus will be either a cellular or a toroidal set.

Unlike cellular sets, toroidal sets can exhibit ``knotting'' and ``winding'' because so can solid tori. In fact it is possible to associate to any toroidal set $K$ two topological notions, its genus $g(K)$ (a nonnegative integer which is a generalization of the classical genus of a knot) and its self-index $\mathcal{N}(K)$ (an Abelian group), that formalize these intuitive ideas. A toroidal attractor has a finite genus and also a certain finiteness property in $\mathcal{N}(K)$, and so the genus and the self-index provide obstructions for the realizability of a toroidal set as an attractor. For example, certain infinite connected sums of knots which are wild just at a single point, or certain generalized solenoids, or any knotted solenoid, are all examples of toroidal sets which cannot be realized as attractors of homeomorphisms of $\mathbb{R}^3$ (\cite{hecyo1} and \cite{hecyo2}).

In Section \ref{sec:toroidal} we obtain a complete solution to the realization problem for knotted toroidal sets. A toroidal set is knotted if it has a positive genus, or equivalently every sufficiently small torus neighbourhood of the toroidal set $K$ is knotted (in the sense that its core curve is a nontrivial knot). In Theorem \ref{lem:unknotted} we shall show that a knotted toroidal set can be realized as an attractor for a homeomorphism if and only if it can be realized as an attractor for a flow; Theorem \ref{thm:charact} then provides explicit characterizations of when this realization as an attractor is possible. These theorems improve earlier results from \cite{hecyo2} by removing assumptions about the cohomology of $K$. A  proof of Theorem \ref{lem:unknotted} obtained by different and more complicated means can be seen in \cite{hecyo3}.

Although somewhat obscured by technical details, the basic idea in the proofs is quite simple. The key ingredient is a finiteness result of Kneser and Haken which essentially states that for every compact $3$--manifold $M$ there exists a number $n=n(M)$ such that any collection $\{S_1,\ldots,S_m\}$ of $m > n(M)$ disjoint incompressible surfaces in $M$ contains a parallel pair, i.e. some pair $S_i, S_j$ in the collection cobounds a region of $M$ which is homeomorphic to a product $S \times [0,1]$. (The precise statement, along with the necessary background, is given in Section \ref{sec:background}). If one has an attractor $K$ for a homeomorphism $f$ and picks a compact neighbourhood $N$ of $K$ contained in $\mathcal{A}(K)$, iterating $\partial N$ backwards with a suitable power of the dynamics produces a countable family of disjoint surfaces contained in the compact manifold $M:= \mathbb{S}^3 \backslash {\rm int}\ N$. If one can ensure that these are incompressible (in the case of knotted toroidal sets this is guaranteed by taking $N$ to be a knotted solid torus), and regardless of the value of $n(M)$, at least two of them must be parallel. One can then use these to produce a flow which has $K$ as an attractor, thus falling back onto the case of flows which is fully understood as mentioned above.

The argument outlined in the previous paragraph not only works for knotted toroidal sets but in general. This is the content of Section \ref{sec:generalize}. The reason for discussing first the particular case of knotted toroidal sets is that it is easier to visualize and also the incompressibility assumption needed in Theorem \ref{thm:nbhinc} is automatic. Similar techniques involving compressibility arguments were used in \cite{Jiang1} in order to characterize those closed orientable $3$-manifolds admitting a diffeomorphism possessing a  knotted Smale solenoid contained in its non-wandering set.

The paper is organized as follows. Section \ref{sec:background} contains some background definitions and results. Most of these are needed to sort out the various technical issues that appear in carrying out the programme described earlier. Section \ref{sec:toroidal} solves the realization problem for knotted toroidal sets; it also lays out the essential ideas for the proofs in the general case discussed in Section \ref{sec:generalize}. Section \ref{sec:att-rep} discusses certain attractor-repeller decompositions of $\mathbb{S}^3$ which arise naturally when considering toroidal sets.

\section{Background} \label{sec:background}

\subsection{Attractors} \label{subsec:dynamics} We  consider $\mathbb{R}^3$ as our phase space. The dynamics will be invertible; i.e. either a (complete) flow or a discrete dynamical system generated by the iteration of a homeomorphism $f$ of $\mathbb{R}^3$. Sometimes it will be convenient to work in $\mathbb{S}^3 = \mathbb{R}^3 \cup \{\infty\}$ and then we shall assume $f$ to be extended to a homeomorphism of $\mathbb{S}^3$ by letting $\infty$ be fixed.

We formulate the following well known definitions in the case of discrete dynamics; their translation to flows is straightfoward. An attractor $K$ for $f$ is a compact invariant ($f(K) = K$) set which is stable in the sense of Lyapunov and attracts nearby points, that is, it has an open neighbourhood $U$ such that for every point $p \in U$ the forward iterates $f^n(p)$ eventually enter any prescribed neighbourhood of $K$. The largest $U$ with this property is called the basin of attraction of $K$ and denoted by $\mathcal{A}(K)$. It is an open, invariant neighbourhood of $K$. The stability condition on $K$ implies that $K$ attracts not only points but in fact compacta in $\mathcal{A}(K)$: for every compactum $P \subseteq \mathcal{A}(K)$ and every neighbourhood $V$ of $K$ there exists $n_0$ such that $f^n(P) \subseteq V$ for every $n \geq n_0$.

We will often use the following manipulation, which works only in the context of discrete dynamics. If $K$ is an attractor for $f$ it is easy to check that $K$ is also an attractor, and with the same basin of attraction, for any power $f^r$ of $K$ (with $r \geq 1$). This has the following useful consequence: given any compact neighbourhood $N \subseteq \mathcal{A}(K)$, after replacing the dynamics with a suitable power we may assume that $N$ is positively invariant. Indeed, since $K$ attracts compacta in $\mathcal{A}(K)$ we may find $r_0$ such that $f^r(N) \subseteq {\rm int}\ N$ for every $r \geq r_0$; then replacing $f$ with $f^{r_0}$ turns $N$ into a positively invariant set without altering the fact that $K$ is an attractor with basin of attraction $\mathcal{A}(K)$. (This is generally not possible for flows; in fact, the topological type of positively invariant neighbourhoods for flows is essentially determined by $K$).

In a locally connected and locally compact space an attractor has finitely many connected components $K_1, \ldots, K_r$, each contained in a component of the basin of attraction. In the case of flows each $K_i$ is invariant and an attractor whose basin of attraction is the component of $\mathcal{A}(K)$ that contains it. In the case of discrete dynamics this is not necessarily true since the components of $K$ might get permuted among themselves, but after replacing $f$ with $f^{r!}$ all components will be invariant and each will be, as in the case of flows, an attractor whose basin of attraction is the component of $\mathcal{A}(K)$ that contains it. This allows us to work always with connected attractors without loss of generality.

Suppose $K$ is an attractor for a flow. Let $P \subseteq \mathcal{A}(K)$ be a compact positively invariant neighbourhood of $K$. Using the flow to construct suitable homotopies it can be shown that both inclusions $K \subseteq P \subseteq \mathcal{A}(K)$ induce isomorphisms in \v{C}ech cohomology (see \cite{gunthersegal}, \cite{hastings1}, \cite{kaprod}, \cite{sanjurjo4}, 
\cite{sanjurjo}). For discrete dynamics the situation is more complicated since one has to replace the flow-induced homotopies by algebraic arguments, but one can show (see for instance \cite{gobbino1} and \cite{pacoyo1}) that the inclusion $K \subseteq \mathcal{A}(K)$ induces isomorphisms in \v{C}ech cohomology with coefficients in a field and also with coefficients in $\mathbb{Z}$ provided any one of $\check{H}^*(K;\mathbb{Z})$ or $\check{H}^*(\mathcal{A}(K);\mathbb{Z})$ is finitely generated. We will concentrate on attractors $K$ that are connected and nonseparating, which amounts to saying $\check{H}^0(K;\mathbb{Z}_2) = \mathbb{Z}_2$ and (by Alexander duality) $\check{H}^2(K;\mathbb{Z}_2) = 0$, so we will always have $H^2(\mathcal{A}(K);\mathbb{Z}_2) = 0$. By the universal coefficient theorem $H_2(\mathcal{A}(K);\mathbb{Z}_2) = 0$, which intuitively means that $\mathcal{A}(K)$ contains no cavities. Notice that in the case of discrete dynamics, and unlike the case of flows, the inclusion $K \subseteq P$ does not necessarily induce isomorphisms in cohomology. This is directly related to the fact discussed earlier that one can achieve that any prescribed $N$ become positively invariant, without changing $K$ or $\mathcal{A}(K)$, by taking a suitable power of the dynamics.

Attractors for flows have several special properties. By using a Lyapunov function one can construct compact positively invariant neighbourhoods $P \subseteq \mathcal{A}(K)$ of the attractor whose boundary is transverse to the flow. This ensures that $(\partial P) \times (-1,1)$ is homeomorphic (via the flow) to an open neighbourhood of $\partial P$ in $\mathbb{R}^3$. Using results from the theory of homology manifolds this entails that $\partial P$ is a $2$--manifold (see \cite[Corollary 4, p. 74]{chewningowen1}) and so $P$ is a compact $3$--manifold. Since the homology of $P$ is determined by that of $K$ by the results mentioned in the previous paragraph, this sometimes allows to identify $P$ completely. For example if $K$ has the \v{C}ech cohomology of a point then the same is true of $P$; Lefschetz duality shows that $\partial P$ has the homology of a sphere and finally the Sch\"onflies theorem for bicollared spheres implies that $P$ is a $3$--ball. As a consequence $K$ is cellular, since flowing $P$ forwards with the dynamics produces a neighbourhood basis of $K$ comprised of $3$--cells.

Another feature of attractors for flows is that they sit in phase space in a relatively tame way which we describe now (details can be found in \cite{gunthersegal} and \cite{mio5}). Again by picking a compact positively invariant neighbourhood $P \subseteq \mathcal{A}(K)$ whose boundary is transverse to the flow we have that $P \setminus K$ is homeomorphic (via the flow) to $(\partial P) \times [0,+\infty)$. For brevity we say that $P \setminus K$ has a product structure. Conversely, if $K$ is a compact set and $P$ is a compact neighbourhood of $K$ such that $P \setminus K$ has a product structure (where one of the factors is $[0,+\infty)$, as before), then one can easily construct a flow on the ambient manifold having $K$ as an  attractor as follows. Endow $[0,+\infty)$ with the flow $(s,t) \longmapsto s e^t$. This has $0$ as a fixed point which repells everything towards $+\infty$. Extending it to a flow on $(\partial P) \times [0,+\infty)$ which leaves the first coordinate unchanged one obtains a flow that pushes everything towards infinity uniformly. Using the homeomorphism $P \setminus  K \cong (\partial P) \times [0,+\infty)$ this flow can be copied to $P \setminus K$. Then one can slow down the trajectories as they approach $K$ in such a way that they become stationary ``at time $+\infty$''. This slowed down flow can then be extended by the identity to $K$ and realizes it as an attractor. The ``slowing down'' construction is trivial in the differentiable context (just multiply the velocity field by some suitable damping function) but not so much in the topological setup. It has been described in great generality by Beck \cite[Chapters 4 and 5]{beck1} but in our case the flow in ${\rm int}\ P \setminus K$ is parallelizable and then the elementary argument in the proof of \cite[Lemma 3.2]{hecyo3} suffices.

\subsection{Attractors and piecewise linearity} \label{subsec:plapprox} The tools of $3$--manifold topology work best in a piecewise linear context, and on occasion we will need our dynamical objects to be in this category. We discuss two instances of this.

The stability condition on attractors ensures that they have neighbourhood bases comprised of positively invariant sets; i.e. compact neighbourhoods $P \subseteq \mathcal{A}(K)$ with the property that $f(P) \subseteq {\rm int}\ P$. It is easy to see that these can always be taken to be polyhedral $3$--manifolds, as follows. First one covers $P$ with finitely many tiny cubes to obtain a slightly larger $P'$ which is now a polyhedron (it being a union of cubes) that still satisfies $f(P') \subseteq {\rm int}\ P$ (provided the cubes are taken tiny enough) and hence $f(P') \subseteq {\rm int}\ P'$. Then we take a regular neighbourhood of $N$ (again, sufficiently small) to turn it into a polyhedral $3$--manifold which is still positively invariant.

If $f$ is a homeomorphism having an attractor $K$, it is always possible to replace $f$ with a homeomorphism $f'$ which still has $K$ as an attractor and is piecewise linear on $\mathbb{R}^3 \setminus K$ (or $\mathbb{S}^3\setminus K$), perhaps at the cost of changing the basin of attraction of $K$. This relies on classical approximation theorems of $3$--manifold topology and is proved in  \cite[Lemma 7.5, p. 73]{hecyo2}. The proof given there in fact establishes a slightly stronger result which will be needed in Sections \ref{sec:att-rep} and \ref{sec:generalize} and controls to what extent the basin of attraction of $K$ is altered by this manipulation: given any positively invariant neighbourhood $P$ of $K$ in $\mathcal{A}(K)$, one can take $f'$ in such a way that $P$ is still contained in the basin of attraction of $K$ for the new map $f'$ and is, in fact, positively invariant for it.

\subsection{Toroidal sets} \label{subsec:toroidal}

As mentioned in the Introduction, a toroidal set is a compactum $K \subseteq \mathbb{R}^3$ that is not cellular and has a neighbourhood basis of solid tori $\{T_n\}$; i.e., sets homeomorphic to $\mathbb{S}^1 \times \mathbb{D}^2$. We will always assume the $T_n$ to be nested in that $T_{n+1} \subseteq {\rm int}\ T_n$ for every $n$. Since solid tori are connected and do not separate $\mathbb{R}^3$, the same is true of toroidal sets. It follows from a deep approximation result of Moise \cite[Theorem 1, p. 253]{moise2} that one can always perturb each $T_n$ by an arbitrarily small amount to turn it into a polyhedral torus. Thus if necessary we can take neighbourhood bases $\{T_n\}$ where each $T_n$ is polyhedral, and this will be done tacitly.

Toroidal sets $K$ may be classified according to their one dimensional integral \v Cech cohomology group $\check{H}^1(K;\mathbb{Z})$. If $\{T_n\}$ is any (nested) neighbourhood basis for $K$, the continuity property of  \v Cech cohomology entails that  $\check{H}^1(K;\mathbb{Z})$ is the direct limit of the following direct limit:
\[
\check{H}^1(T_1;\mathbb{Z})\longrightarrow\check{H}^1(T_2;\mathbb{Z})\longrightarrow\cdots
\]
where the arrows are induced by the inclusions $T_{n+1}\hookrightarrow T_{n}$. Since each $T_n$ is a solid torus, each of these arrows can be identified with $\mathbb{Z}\xrightarrow{\cdot w_n}\mathbb{Z}$. Geometrically, $w_n$ is the algebraic number of times that $T_{n+1}$ winds inside $T_n$. It is then easy to see (see \cite[Proposition~1.6]{hecyo1}) that there are three mutually exclusive possibilites for $\check{H}^1(K;\mathbb{Z})$: \label{eq:coh}
\begin{enumerate}
    \item[(1)] $\check{H}^1(K;\mathbb{Z})=0$. This happens whenever infinitely many of the $w_n$ vanish. In this case we say that $K$ is a \emph{trivial} toroidal set.
    \item[(2)] $\check{H}^1(K;\mathbb{Z})=\mathbb{Z}$. This is the case when $w_n=1$ for all but finitely many $n$.
    \item[(3)] Otherwise $\check{H}^1(K;\mathbb{Z})$ is not finitely generated.
\end{enumerate}

The Whitehead continuum (see p. \pageref{fig:white}) is an example of a trivial toroidal set. Any tame knot is an example of a toroidal with $\check{H}^1=\mathbb{Z}$, as is any toroidal attractor for a flow (as argued below). Any generalized solenoid is an example of a toroidal set of the third kind.

The amount of knottedness of a toroidal set $K$ can be measured by its genus. This quantity, introduced in \cite{hecyo1} and denoted by $g(K)$, is defined as the minimum $g\in\{0,1,\ldots,\infty\}$ such that $K$ possesses arbitrarily small neighbourhoods that are polyhedral solid tori of genus $\leq g$. The genus of a solid torus is defined to be the genus of a core curve in the ordinary sense of knot theory. (This definition is correct because the core of a solid torus is unique up to isotopy). A toroidal set has genus zero if and only if it has a neighbourhood basis of (polyhedral) unknotted solid tori. In that case we also say that $K$ itself is unknotted. If $K$ has a positive genus we say that it is knotted. Notice that a toroidal set may have an infinite genus.

\subsection{Toroidal attractors}\label{subsec:toratt} Suppose $K$ is a toroidal set which is an attractor for a homeomorphism $f$. Since $\mathcal{A}(K)$ is a neighbourhood of $K$ there exists a solid torus $T \subseteq \mathcal{A}(K)$ which is a neighbourhood of $K$, and by replacing $f$ with some iterate we may take $T$ to be positively invariant. As mentioned earlier we may assume $T$ to be polyhedral and $f$ to be piecewise linear outside $K$, and then $\{T_n\} := f^n(T)$ is a neighbourhood basis of $K$ which consists of polyhedral solid tori. Since all the $T_n$ are ambient homeomorphic they are all knotted in the same way, so they all have the same genus which is then the genus of $K$. Thus in particular the genus of a toroidal attractor is finite. Also, by construction each $T_{n+1}$ sits inside $T_n$ in the same way that $T_1$ sits inside $T_0$ so all the winding numbers $w_n$ are equal to some $w$ and so $\check{H}^1(K;\mathbb{Z})$ is either zero (if $w = 0$), $\mathbb{Z}$ (if $w=1$) or the $w$--adic integers if $w \geq 1$.

Now suppose $K$ is a toroidal attractor for a flow. Then there is a solid torus neighbourhood $T$ of $K$ contained in $\mathcal{A}(K)$. Recalling that the inclusion $K \subseteq \mathcal{A}(K)$ induces isomorphisms in cohomology and factoring it through $T$ one see that $\check{H}^1(K;\mathbb{Z}) = 0$ or $\mathbb{Z}$. The first case was discussed earlier and led to the conclusion that $K$ is cellular, which is ruled out by the definition of toroidal sets. Thus we must have $\check{H}^1(K;\mathbb{Z}) = \mathbb{Z}$.

Two nested solid tori $T' \subseteq {\rm int}\ T$ are called concentric if $T  \setminus  {\rm int}\ T'$ is homeomorphic to the product of a $2$--torus and a closed interval. Let $K$ be a toroidal set which is an attractor for a homeomorphism $f$ and let $T_0$ be a solid torus neighbourhood for $K$ contained in its basin of attraction. Suppose that there exists an iterate $f^r$ (with $r \geq 1$) such that $f^r(T_0)$ is contained in the interior of $T_0$ and is concentric with it. Then $T_n := f^{nr}(T_0)$ is a neighbourhood basis of $K$ that consists of concentric solid tori, and so $T_0  \setminus  K$ is homeomorphic to a $2$--torus times a half--open interval. This product structure allows one to realize $K$ as an attractor for a flow as described in Subsection \ref{subsec:dynamics}. The converse is also true: if a toroidal set is an attractor for a flow, then it has a neighbourhood basis of concentric solid tori. Essentially the reason is that being an attractor for a flow forces any positively invariant neighbourhood $P$ to be a solid torus (this is a nontrivial fact), and then iterating $P$ forwards with the time-one map of the flow produces a neighbourhood basis of $K$ comprised of concentric tori $P_n$. They are concentric because the flow itself provides a homeomorphism between $(\partial P) \times [n,n+1]$ and the region $P_n \backslash {\rm int}\ P_{n+1}$. Details can be found in \cite[Theorem~3.11]{hecyo1}.


\subsection{Some $3$--manifold topology} \label{subsec:3mfds}

This section collects some definitions and results pertaining to $3$--manifold topology, taylored to our needs. We follow the classical books by Hempel \cite{hempel1} and Jaco \cite{jaco1}. Every object is assumed to be polyhedral.

We consider a $3$--manifold $M \subseteq \mathbb{R}^3$, possibly with a nonempty compact boundary $\partial M$. By a surface $S$ in $M$ we mean a compact $2$--manifold without boundary. We always suppose that $S$ is either contained in $\partial M$ (and it is therefore a union of components of $\partial M$) or entirely contained in ${\rm int}\ M$. A compressing disk for $S$ (in $M$) is a disk $D \subseteq M$ that intersects $S$ precisely along $\partial D$ and such that $\partial D$ is not nullhomotopic in $S$. Observe that $2$--spheres do not have compressing disks.

The surface $S$ is said to be compressible in $M$ if it has a compressing disk $D$ or it is a $2$--sphere which bounds a $3$--ball in $M$. Otherwise, $S$ is called incompressible. A compressible surface $S$ having a compressing disk $D$ can be cut along it: one removes a thin open annulus along $\partial D$ from $S$ and caps the (newly created) two boundary components of the resulting surface with two parallel copies of $D$. This produces a new surface $S' \subseteq M$ which is still compact and boundariless and is simpler than $S$ in that the nonzero element of $\pi_1(S)$ represented by $\partial D$ has now been killed. It is straightforward to check that $\chi(S') = \chi(S) + 2$, and this implies that iterating this process of cutting along compressing disks will always lead in a finite number of steps to a (generally not connected) surface in $M$ each of whose components has no further compressing disks (perhaps because it is a $2$--sphere). When $S$ is a component of $\partial M$ the whole manifold $M$ can be cut along the compressing disk to obtain a new manifold $M'$. The resulting surface $S'$ is then still contained in $\partial M'$.

To illustrate the definition, and for later reference, we consider some examples. (See \cite[III.2 and 3]{jaco1} for more details).

(1) Take $M$ to be a solid torus and $S$ its boundary. A compressing disk for $S$ in $M$ is then just a meridional disk of the torus. Notice that cutting the torus $M$ along the compressing disk produces a ball contained in the torus.

(2) As a variation of (1) consider now a nested pair of solid tori $T' \subseteq T$. Let $M := T \setminus {\rm int}\ T'$ and take $S = \partial T$ to be the boundary of the bigger torus. A compressing disk $D$ for $S$ in $M$ is now a meridional disk of $T$ that misses $T'$ (so that it is contained in $M$). Such a disk $D$ not always exists; in fact, it exists if and only if there is a ball $B$ such that $T' \subseteq B \subseteq T$. This is very easy to prove. If a compressing disk $D$ exists then we may cut $T$ along it to obtain a ball $B$ that contains $T'$ (because $D$ was disjoint from $T'$). Conversely, if such a ball $B$ exists then by performing a homotethy inside it we may shrink $T'$ to a tiny torus in $T$ and in particular find a meridional disk of $T$ that misses $T'$. Such a meridional disk is a compressing disk for $S$ in $M$.

(3) Another standard example is that of $M := \mathbb{S}^3 \setminus {\rm int}\ T$ where $T$ is a solid torus. The boundary $\partial M$ is compressible in $M$ if and only if $T$ is unknotted.

The following result records the crucial fact that a compact manifold $M$ only admits finitely many ``essentially different'' incompressible surfaces. This is known as the Kneser--Haken finiteness theorem. We use the formulation given in \cite[III.20, p. 42]{jaco1} removing the condition of $\partial$--incompressibility since our surfaces are always taken to be boundaryless:

\begin{theorem} \label{thm:kneserhaken} Let $M$ be compact orientable $3$--manifold. There exists an integer $n$ which depends only on $M$ such that any collection of pairwise disjoint, closed, connected incompressible surfaces $S_1, \ldots, S_k$ with $k > n$ contains a pair of parallel surfaces.
\end{theorem}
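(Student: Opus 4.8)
The plan is to invoke Haken's theory of normal surfaces, following the classical treatment in Jaco \cite{jaco1}. First I would fix a triangulation of $M$ and let $t$ denote its number of tetrahedra; the integer $n$ we seek will depend only on $t$, and hence only on $M$. The initial step is to isotope the surfaces $S_1, \ldots, S_k$ simultaneously into \emph{normal form}, so that after the isotopy each $S_i$ meets every tetrahedron in a disjoint union of \emph{normal triangles} (disks cutting off a single vertex) and \emph{normal quadrilaterals} (disks separating the four vertices into two pairs). This normalization is where incompressibility is essential: it is precisely what allows one to remove, by innermost-disk and outermost-arc surgeries, the inessential curves and arcs in which each $S_i$ meets the $2$--skeleton, without destroying the surfaces and without creating new intersections among them.

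Once all the surfaces are normal, I would cut $M$ open along $S_1 \cup \cdots \cup S_k$ and sort the resulting regions into two classes. A region is a \emph{product region} if it is homeomorphic to $S \times [0,1]$ with $S \times \{0\}$ and $S \times \{1\}$ lying on two of the cut surfaces; such a region exhibits those two surfaces as parallel. All remaining regions I would call \emph{non-product}. The combinatorial key is to bound the number of non-product regions solely in terms of $t$. To this end one observes that within each tetrahedron the surfaces cut out only finitely many combinatorial types of blocks, most of which are products between parallel normal disks — and here one uses the standard fact that disjointness forces all quadrilaterals inside a single tetrahedron to be of one of the three combinatorial types, so that consecutive parallel triangles, and consecutive parallel quadrilaterals of that common type, always cobound product blocks. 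Only a bounded number of blocks per tetrahedron, namely those adjacent to a vertex or marking the transition between the triangle and quadrilateral families, can fail to be products. Summing over the $t$ tetrahedra bounds the number of non-product complementary regions by a fixed linear function of $t$.

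With this estimate in hand the conclusion follows by pigeonhole. I would set $n$ equal to the resulting bound on the number of non-product regions. If $k > n$, then cutting along the $k$ surfaces produces more regions than there are non-product regions, so at least one complementary region must be a product $S \times [0,1]$; its two boundary components form a parallel pair among the $S_i$, as required.

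The main obstacle I anticipate is the simultaneous normalization of the first paragraph: one must isotope an entire disjoint family of incompressible surfaces into normal position \emph{at once}, controlling the surgeries so that they neither violate incompressibility nor introduce intersections between distinct $S_i$. A secondary, purely bookkeeping difficulty is making the count of non-product regions precise; since the theorem asserts only the \emph{existence} of $n$, however, any crude linear bound in $t$ suffices, which considerably lightens that second task.
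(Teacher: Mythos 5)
First, a point of orientation: the paper does not prove this statement at all --- it is the classical Kneser--Haken finiteness theorem, quoted from \cite[III.20]{jaco1} with the $\partial$--incompressibility hypothesis dropped because the surfaces here are closed. So your proposal is competing with the classical normal-surface proof rather than with anything in the paper, and the overall shape you describe (triangulate, normalize the family, count the complementary pieces that fail to be products, conclude by counting) is indeed the right one. The normalization difficulty you flag is real but manageable: the weight-reducing swaps can be performed on the whole disjoint family simultaneously, although for a non-irreducible $M$ one must first pass to the prime decomposition, since an innermost-disk swap is an isotopy only when the resulting $2$--sphere bounds a ball.

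The genuine gap is in your final pigeonhole. From ``$k$ exceeds the number of non-product regions'' you infer that some complementary region must be a product, but cutting $M$ along $k$ disjoint closed surfaces need not produce more than a bounded number of regions at all: non-separating surfaces do not increase the count. For instance, $k$ disjoint vertical tori $\gamma_i\times\mathbb{S}^1$ in $\Sigma\times\mathbb{S}^1$, over disjoint non-separating curves $\gamma_i$ with connected complement in $\Sigma$, have a single complementary region, no matter how large $k$ is; so the inequality ``number of regions $>$ number of non-product regions'' simply does not follow from $k>n$. A second, smaller defect is that a region assembled entirely from product blocks need not certify a parallel pair: the block structures can glue up to a \emph{twisted} $I$--bundle, or to a product both of whose horizontal boundary surfaces are the same $S_i$ (the two sides of one non-separating surface). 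The classical argument closes both holes by counting in the dual graph of the decomposition and absorbing the exceptional cases into a homological term, giving a bound of the shape $n=6t+2\dim H_1(M;\mathbb{Z}_2)+(\text{a contribution from }\partial M)$, which still depends only on $M$. Without some correction of this kind your last step does not go through.
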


Two surfaces $S_i$ and $S_j$ are parallel if they cobound a connected component of $M \backslash \bigcup S_i$ that is homeomorphic to the product  $(\partial S_i) \times [0,1]$.

Finally, we shall make use of a very useful algebraic characterization of compressibility. Suppose $S$ is a connected component of $\partial M$ or a connected, $2$--sided surface in the interior of $M$. If $S$ is compressible in $M$ (and is not a $2$--sphere) then the boundary of a compressing disk is a nonzero element in the kernel of the inclusion induced homomorphism $\pi_1(S) \longrightarrow \pi_1(M)$. Due to the loop theorem of Dehn and Papakyriakopoulos the converse is also true and leads to the following characterization (\cite[Lemma 6.1, p. 58]{hempel1}): a component $S$ of $\partial M$ or a connected, $2$--sided surface $S \subseteq {\rm int}\ M$ other than a $2$--sphere is incompressible in $M$ if and only if the inclusion induced homomorphism $\pi_1(S) \longrightarrow \pi_1(M)$ is injective.

\subsection{Algebraic background} \label{subsec:prelim_algebra}

We will make use of the notion of an amalgamated product. We recall now some basic facts about it (see \cite[Section 4.2]{magnuskarrasssolitar}).

Suppose $A, B$ are groups and $C$ is another group which injects into $A$ and $B$ through homomorphisms $\alpha : C \longrightarrow A$ and $\beta : C \longrightarrow B$. Then one can form the amalgamated product $A *_C B$, which is essentially the free product of $A$ and $B$ modulo the identification $\alpha(c) = \beta(c)$ for every $c \in C$ (or, more precisely, modulo the normal closure of the set $\alpha(c) \beta(c)^{-1}$). In terms of group presentations, if one has $A = \langle a_i : r_j \rangle$ and $B = \langle b_k : s_{\ell} \rangle$ then $A*_C B = \langle a_i, b_k : r_j, s_{\ell}, \alpha(c) = \beta(c) \rangle$ where $c$ ranges over all $C$ (or, equivalently, over a system of generators of $C$). There are natural homomorphisms from $A$, $B$, $C$ to $A *_C B$: elements of $A$ and $B$ go directly to elements of the above presentation; elements of $C$ are first sent to $A$ or $B$ via $\alpha$ or $\beta$ and then interpreted in $A *_C B$ (the relation $\alpha(c) = \beta(c)$ ensures that either way gives the same image). We shall denote the images of these maps by $\overline{A}$, $\overline{B}$ and $\overline{C}$.

We will find ourselves in a situation where the natural map $A \longrightarrow A *_C B$ is surjective; that is, $\overline{A} = A *_C B$. Then we claim that $\beta$ is surjective. This follows from the normal form theorem for amalgamated free products (\cite[Corollary 4.4.1, p. 205]{magnuskarrasssolitar}), which states that every element in $A *_C B$ can be written uniquely in the form $w u_1 v_1 \ldots u_n v_n$ where: (i) $w \in \overline{C}$, (ii) each $u_i$ represents a nontrivial coset of $\overline{A} \mod \overline{C}$, (iii) each $v_i$ represents a nontrivial coset of $\overline{B} \mod \overline{C}$. Notice that the normal form of an element $\overline{a} \in \overline{A}$ accords to the normal form $wu_1v_1 \ldots$ with $w = \overline{a}$ (if $\overline{a} \in \overline{C}$) or with $u_1 = \overline{a}$ (if not), and similarly for $\overline{B}$. Hence the uniqueness of the normal form coupled with the assumption that $\overline{A} = A *_C B$ implies that there is only the trivial coset in $\overline{B} \mod \overline{C}$; i.e., $\overline{B} = \overline{C}$. Therefore $\beta$ is surjective.

\section{Knotted toroidal sets} \label{sec:toroidal}

In this section we prove two theorems which solve the realization problem for knotted toroidal sets. The first shows that for this class of sets, realizability as an attractor for a homeomorphism and for a flow are equivalent.

\begin{theorem} \label{lem:unknotted} Let $K \subseteq \mathbb{R}^3$ be a knotted toroidal set which is an attractor for a homeomorphism $f$ of $\mathbb{R}^3$. Then $K$ can in fact be realized as an attractor for a flow.
\end{theorem}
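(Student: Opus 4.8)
The plan is to reduce the problem to the case of flows by producing a concentric neighbourhood basis, so that the criterion recorded in Subsection~\ref{subsec:toratt} applies. Concretely, it suffices to exhibit an iterate $f^r$ (with $r \geq 1$) and a polyhedral solid torus neighbourhood $T_0$ of $K$ such that $f^r(T_0) \subseteq {\rm int}\ T_0$ is concentric with $T_0$; then $T_0 \setminus K$ is homeomorphic to a $2$--torus times a half--open interval, and this product structure realizes $K$ as an attractor for a flow. First I would set up the dynamics exactly as in Subsection~\ref{subsec:toratt}: after replacing $f$ by a suitable power I may take a positively invariant polyhedral solid torus $T_0 \subseteq \mathcal{A}(K)$, and after the approximation of Subsection~\ref{subsec:plapprox} I may assume $f$ is piecewise linear off $K$ while keeping $T_0$ positively invariant. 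Then $T_n := f^n(T_0)$ is a neighbourhood basis of polyhedral solid tori, all ambient homeomorphic and hence all knotted (were $T_0$ unknotted, $\{T_n\}$ would be an unknotted basis and $K$ would have genus zero, contradicting that $K$ is knotted). Writing $T_{-n} := f^{-n}(T_0)$ for $n \geq 0$, positive invariance gives the increasing chain $T_0 \subseteq T_{-1} \subseteq T_{-2} \subseteq \cdots$, so the tori $\Sigma_n := \partial T_{-n}$ are pairwise disjoint polyhedral surfaces lying in the compact orientable $3$--manifold $M := \mathbb{S}^3 \setminus {\rm int}\ T_0$.

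The heart of the argument, and the step I expect to be the main obstacle, is to show that each $\Sigma_n$ is incompressible in $M$; this is exactly where both defining properties of a knotted toroidal set are used. A compressing disk $D \subseteq M$ for the separating torus $\Sigma_n$ has its interior on one side of $\Sigma_n$. If $D$ lies in the exterior $\mathbb{S}^3 \setminus {\rm int}\ T_{-n} \subseteq M$, it is a compressing disk for $\partial T_{-n}$ in the exterior of the \emph{knotted} solid torus $T_{-n}$, which cannot exist by Example~(3) of Subsection~\ref{subsec:3mfds}. If instead $D$ lies in $T_{-n} \setminus {\rm int}\ T_0$, then by Example~(2) of Subsection~\ref{subsec:3mfds} its existence is equivalent to the existence of a ball $B$ with $T_0 \subseteq B \subseteq T_{-n}$. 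I would rule this out by iterating the dynamics: applying the powers $f^{kn}$ to such a ball produces balls $B_k := f^{kn}(B)$ satisfying $T_{kn} \subseteq B_k \subseteq T_{(k-1)n}$, and one checks directly that these form a nested neighbourhood basis of $K$ consisting of $3$--cells, forcing $K$ to be cellular --- contrary to $K$ being toroidal. Hence no compressing disk of either type exists and every $\Sigma_n$ is incompressible in $M$.

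With incompressibility established the conclusion follows quickly. Applying the Kneser--Haken finiteness Theorem~\ref{thm:kneserhaken} to the collection $\Sigma_0, \ldots, \Sigma_{n(M)+1}$ of $n(M)+2$ pairwise disjoint, closed, connected incompressible surfaces yields a parallel pair $\Sigma_i, \Sigma_j$ with $i < j$. Being parallel, they cobound a component of the complement that is a product of a $2$--torus and $[0,1]$; since the surfaces are nested, this component is the region $T_{-j} \setminus {\rm int}\ T_{-i}$, so $T_{-i}$ and $T_{-j}$ are concentric. Transporting by the homeomorphism $f^j$, which carries the concentric pair $(T_{-i}, T_{-j})$ to $(T_{j-i}, T_0)$, shows that $f^r(T_0) = T_r$ is contained in ${\rm int}\ T_0$ and concentric with it, where $r := j - i \geq 1$. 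This is precisely the hypothesis of the concentricity criterion of Subsection~\ref{subsec:toratt}, which then realizes $K$ as an attractor for a flow, completing the proof.
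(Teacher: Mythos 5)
Your proposal is correct and follows essentially the same route as the paper's proof: the same incompressibility claim for the backward iterates of $\partial T_0$ (ruled out on one side by knottedness of the tori and on the other by non-cellularity of $K$), followed by the Kneser--Haken finiteness theorem to extract a parallel pair and hence a concentric pair of neighbourhoods yielding the product structure needed for a flow. Your extra step of transporting the concentric pair $(T_{-i},T_{-j})$ forward by $f^{j}$ to match the exact hypothesis of the concentricity criterion is a harmless (and slightly more careful) reformulation of what the paper does implicitly.
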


The second theorem, of which (2)$\Rightarrow$(1) is just a restatment of the previous one, provides two additional purely topological answers to the realizability problem for knotted toroidal sets:

\begin{theorem} \label{thm:charact}
Let $K\subseteq\mathbb{R}^3$ be a knotted toroidal set. The following conditions are equivalent:
\begin{enumerate}
    \item[(1)] $K$ is an attractor of a flow.
    \item[(2)] $K$ is an attractor of a homeomorphism.
    \item[(3)] $\check{H}^1(K;\mathbb{Z})\neq 0$ and $g(K)<\infty$.
    \item[(4)] $\pi_1(\mathbb{S}^3 \setminus K)$ is finitely generated.
\end{enumerate}
\end{theorem}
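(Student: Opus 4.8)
The plan is to establish the four equivalences by proving the cycle $(1)\Rightarrow(2)$, $(2)\Rightarrow(3)$, $(3)\Rightarrow(1)$, together with the separate equivalence $(3)\Leftrightarrow(4)$; the implication $(2)\Rightarrow(1)$ is exactly Theorem \ref{lem:unknotted}, so the logical core reduces to characterizing realizability by the two purely topological conditions. The implication $(1)\Rightarrow(2)$ is trivial, since a flow realization yields a homeomorphism realization via the time-one map. For $(2)\Rightarrow(3)$ I would invoke the analysis of Subsection \ref{subsec:toratt}: if $K$ is a toroidal attractor for a homeomorphism, then taking a positively invariant polyhedral solid torus $T\subseteq\mathcal{A}(K)$ and iterating produces a neighbourhood basis $\{f^{n}(T)\}$ of ambient-homeomorphic solid tori, all of the same finite genus, whence $g(K)<\infty$; moreover $\check{H}^1(K;\mathbb{Z})$ is $0$, $\mathbb{Z}$, or the $w$-adic integers, and the value $0$ would force $K$ cellular, contradicting that $K$ is toroidal. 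This gives $\check{H}^1(K;\mathbb{Z})\neq 0$.

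The implication $(3)\Rightarrow(1)$ is the substantive step. Assuming $g(K)<\infty$, choose a polyhedral solid torus neighbourhood $T$ realizing the genus, so that $K$ is knotted means $T$ has knotted core. The condition $\check{H}^1(K;\mathbb{Z})\neq 0$ rules out the trivial case, so by the trichotomy on page \pageref{eq:coh} the winding numbers $w_n$ are eventually nonzero. I would then build a flow realization by producing a neighbourhood basis of concentric solid tori, which by Subsection \ref{subsec:toratt} suffices: one slows down a product flow on $T\setminus K$. To obtain concentricity I expect to use the incompressibility that a knotted solid torus automatically supplies (Example (3) of Subsection \ref{subsec:3mfds}), combined with the Kneser--Haken finiteness theorem (Theorem \ref{thm:kneserhaken}) applied in the compact manifold squeezed between two of the nested tori: among infinitely many disjoint incompressible copies of $\partial T$ one finds a parallel pair, and a parallel pair cobounds a product region, which is precisely concentricity.

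For $(3)\Leftrightarrow(4)$ I would translate between $g(K)$ together with $\check{H}^1$ and the finite generation of $\pi_1(\mathbb{S}^3\setminus K)$. Writing $\mathbb{S}^3\setminus K$ as an increasing union of the knot complements $M_n:=\mathbb{S}^3\setminus \mathrm{int}\,T_n$, van Kampen's theorem expresses $\pi_1(\mathbb{S}^3\setminus K)$ as a direct limit of amalgamated products along the incompressible tori $\partial T_n$; here the algebraic background of Subsection \ref{subsec:prelim_algebra} enters, since incompressibility makes the amalgamating subgroups inject and lets one detect stabilization of the direct limit precisely when the genus stays bounded and the winding is eventually trivial enough. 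Conversely, unbounded genus or a non-finitely-generated $\check{H}^1$ forces the fundamental group of the complement to grow without bound, so it cannot be finitely generated.

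The main obstacle I anticipate is step $(3)\Rightarrow(1)$, specifically ensuring that the Kneser--Haken parallel pair can be promoted to a genuine neighbourhood basis of \emph{concentric} tori converging to $K$, rather than merely producing a single product region: one must arrange the incompressible surfaces so that the parallelism nests coherently inward, and verify that the resulting product structure on $T\setminus K$ is compatible with the half-open-interval factor required to run the slowing-down construction of Subsection \ref{subsec:dynamics}.
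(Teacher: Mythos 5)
Your overall plan is reasonable in outline, but two of your steps contain genuine gaps. In $(2)\Rightarrow(3)$ you assert that $\check H^1(K;\mathbb{Z})=0$ would force $K$ to be cellular. That implication is valid for attractors of \emph{flows} (where a positively invariant neighbourhood has the \v{C}ech cohomology of $K$ and is therefore a ball), but it is false for attractors of homeomorphisms: the Whitehead continuum is a toroidal attractor of a homeomorphism with $\check H^1=0$ and is not cellular. For a knotted toroidal attractor the conclusion $\check H^1\neq 0$ does hold, but the only route to it is to first pass to a flow via $(2)\Rightarrow(1)$ (Theorem \ref{lem:unknotted}) and then run the flow argument of Subsection \ref{subsec:toratt}; this is what the paper does, proving $(1)\Rightarrow(3)$ rather than $(2)\Rightarrow(3)$ directly. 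The second and more serious gap is your direct $(3)\Rightarrow(1)$ via Kneser--Haken. Theorem \ref{thm:kneserhaken} gives a bound $n(M)$ that depends on the compact manifold $M$. In the proof of Theorem \ref{lem:unknotted} the dynamics is essential precisely because the backward iterates $f^{-k}(\partial T_0)$ all lie in the \emph{fixed} compact manifold $\mathbb{S}^3\setminus{\rm int}\ T_0$, so infinitely many disjoint incompressible surfaces accumulate in one manifold and a parallel pair is forced. With only a static neighbourhood basis $\{T_n\}$, the tori $\partial T_1,\dots,\partial T_{N-1}$ lie in $T_0\setminus{\rm int}\ T_N$, whose Haken bound grows with $N$; you never exhibit more surfaces than the bound permits, so no parallel pair is forced. (Note also that each $\partial T_n$ is compressible in $T_0$ via a meridional disk of $T_n$, so there is no fixed manifold containing all of them incompressibly.) The paper avoids this by quoting the explicit construction of a realizing homeomorphism from \cite[Corollary 7.1]{hecyo2} for $(3)\Rightarrow(2)$ and then applying $(2)\Rightarrow(1)$.

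Concerning $(4)$, the paper proves $(1)\Rightarrow(4)$ and $(4)\Rightarrow(1)$ rather than $(3)\Leftrightarrow(4)$, and the actual mechanism differs from your sketch. For $(1)\Rightarrow(4)$ one uses the product structure $P\setminus K\cong(\partial P)\times[0,+\infty)$ to retract $\mathbb{S}^3\setminus K$ onto the compact manifold $\mathbb{S}^3\setminus{\rm int}\ P$. For $(4)\Rightarrow(1)$ one shows that finite generation forces the (injective) inclusion-induced maps $\pi_1(\mathbb{S}^3\setminus{\rm int}\ T_n)\to\pi_1(\mathbb{S}^3\setminus{\rm int}\ T_{n+1})$ to be eventually isomorphisms, applies the amalgamated-product observation of Subsection \ref{subsec:prelim_algebra} to conclude that $\pi_1(\partial T_n)$ surjects onto the fundamental group of the region between consecutive tori, and then invokes Brown's theorem to deduce that this region is a product, i.e.\ the tori are concentric and $K$ is a flow attractor. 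Your contrapositive claim that failure of $(3)$ ``forces the fundamental group of the complement to grow without bound'' is unsupported, and the key closing step (Brown's theorem converting an algebraic isomorphism into a genuine product region) is absent from your outline.
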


In both theorems the knottedness assumption is crucial, and the classical Whitehead continuum is a simple counterexample in this regard. It is constructed as follows. One starts with an unknotted solid torus $T_0$ and considers a homeomorphism $f$ of $\mathbb{R}^3$ which sends $T_0$ onto a solid torus along the black curve shown in the left panel of Figure \ref{fig:white}. The torus $f(T_0)$ is taken to be so thin that it is contained in the interior of $T_0$. Iterating this construction yields a compactum $K := \bigcap_{n \geq 0} f^n(T_0)$ which is the classical Whitehead continuum. It is shown in the right panel of Figure \ref{fig:white}. By construction it is an attractor for $f$, and it is toroidal (it evidently has a neighbourhood basis of solid tori; it can also be shown that it is not cellular). Since $T_0$ is unknotted and $f$ is an ambient homeomorphism all the iterates $f^n(T_0)$ are also unknotted, and so the Whitehead continuum is an unknotted toroidal set.

It is straightforward to check that $\check{H}^1(K;\mathbb{Z}) = 0$, so in particular the Whitehead continuum cannot be realized as an attractor for a flow (since these have cohomology $\mathbb{Z}$ in dimension $1$). One can also show that its complement in $\mathbb{S}^3$ is simply connected. Thus the Whitehead continuum provides a counterexample to Theorem \ref{lem:unknotted} and 
(4) $\Rightarrow$ (1) of Theorem \ref{thm:charact} in the absence of knottedness.

\begin{figure}[h]
\centering
{\includegraphics{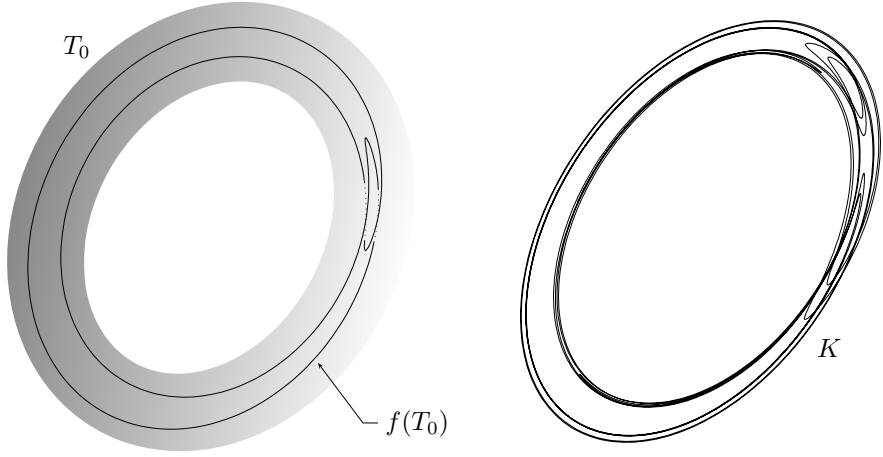}}
\caption{Whitehead's continuum as an attractor}
\label{fig:white}
\end{figure}

\begin{proof}[Proof of Theorem \ref{lem:unknotted}] As mentioned in Subsection \ref{subsec:dynamics} we can assume that $f$ is piecewise linear on $\mathbb{R}^3  \setminus  K$. Since $K$ has a neighbourhood basis of polyhedral solid tori, there is one such torus $T_0$ contained in its basin of attraction. Notice that $T_0$ must be knotted, since otherwise by iterating it with the dynamics we would obtain a neighbourhood basis of $K$ comprised on unknotted tori, contradicting the assumption that $K$ is knotted. Finally, after replacing $f$ with a power of itself we may assume that $f(T_0) \subseteq {\rm int}\ T_0$.

Denote by $M := \mathbb{S}^3  \setminus  {\rm int}\ T_0$, which is a compact $3$--manifold. This manifold contains in its interior the infinite family of disjoint, polyhedral, $2$--sided tori $S_k := f^{-k}(\partial T_0)$ for $k \geq 1$. Notice that each $S_k$ is the boundary of the solid torus $T_k := f^{-k}(T_0)$ and these form an increasing nested sequence. Thus each two consecutive $S_k$ and $S_{k+1}$ cobound the connected region $T_{k+1}  \setminus  {\rm int}\ T_k$.
\smallskip

{\it Claim.} All the $S_k$ are incompressible in $M$.

{\it Proof of claim.} Consider the pair of nested solid tori $T_0 \subseteq f^{-k}(T_0)$, so that $S_k$ is the boundary of the bigger torus. Notice that $S_k$ is contained in the interior of $M$ and separates it in two components; namely $f^{-k}(T_0) \backslash {\rm int}\ T_0$ and $\mathbb{S}^3 \backslash {\rm int}\ f^{-k}(T_0)$. If a compressing disk for $S_k$ exists, it must lie in one of those components. However $S_k$ is incompressible in both components since otherwise either: (i) there would exist a ball $B$ such that $T_0 \subseteq B \subseteq f^{-k}(T_0)$ or (ii) $f^{-k}(T_0)$ would be unknotted. The latter is not possible because $f^{-k}$ is an ambient homeomorphism and $T_0$ was taken to be unknotted. Case (i) is not possible either: iterating $B$ with the dynamics would produce a neighbourhood basis of $K$ comprised of balls and so $K$ would be cellular. This finishes the proof of the claim. $_{\blacksquare}$
\smallskip

Now we apply Theorem \ref{thm:kneserhaken} to conclude that at least two of the $2$--tori in the family $\{S_k\}$, say $S_{k_1}$ and $S_{k_2}$ with $k_2 > k_1$, are parallel. This means that the component $U$ of $M \backslash \bigcup_k S_k$ that they cobound satisfies that its closure $\overline{U}$ (which is a compact $3$--manifold) is homeomorphic to $\mathbb{T}^2 \times [0,1]$ via a homeomorphism that sends $\mathbb{T}^2 \times 0$ onto $S_{k_1}$ and $\mathbb{T}^2 \times 1$ onto $S_{k_2}$. However $\overline{U} = T_{k_2}  \setminus  {\rm int}\ T_{k_1}$, and so we see that these two solid tori are concentric. The discussion in Subsection \ref{subsec:dynamics} then shows that $K$ can be realized as an attractor for a flow.
\end{proof}

\begin{proof}[Proof of Theorem \ref{thm:charact}] The implication (1)$\Rightarrow$(2) is trivial and (2) $\Rightarrow$ (1) is the previous theorem. (3) $\Rightarrow$ (2) is \cite[Corollary 7.1]{hecyo2} and $(1)\Rightarrow (3)$ follows from the discussion in Subsection~\ref{subsec:toratt}.

(1) $\Rightarrow$ (4). Let $P \subseteq \mathcal{A}(K)$ be a compact $3$--manifold that is a positively invariant neighbourhood of $K$. As discussed in Subsection \ref{subsec:dynamics} we have $P \setminus K \cong (\partial P) \times [0,+\infty)$. In particular $P \setminus K$ can be deformed (by a strong deformation retraction) onto $\partial P$. This can be extended (by the identity) to a strong deformation retraction of $\mathbb{S}^3 \setminus K$ onto $\mathbb{S}^3 \setminus {\rm int}(P)$, and so $\pi_1(\mathbb{S}^3 \setminus K) = \pi_1(\mathbb{S}^3 \setminus {\rm int}\ P)$ (we omit basepoints since the spaces involved are connected because $K$ does not separate $\mathbb{S}^3$). Finally, since $\mathbb{S}^3 \setminus {\rm int}\ P$ is a compact $3$--manifold because $P$ is polyhedral, it has a finitely generated fundamental group.

(4) $\Rightarrow$ (1). Let $T_n$ be a neighbourhood basis of $K$ comprised of nested solid (polyhedral) tori. We can assume, perhaps after discarding finitely many of the $T_n$, that (i) each $T_n$ is knotted and (ii) each $T_{n+1}$ is essential in $T_n$; i.e. there does not exist a ball in $T_n$ which contains $T_{n+1}$. The reason for (i) is that otherwise $K$ would have a neighbourhood basis of unknotted tori and so would be unknotted, and for (ii) is that otherwise $K$ would have a neighbourhood basis of balls and so would be cellular.

Denote by $C_n := \mathbb{S}^3 \setminus {\rm int}\ T_n$. These are compact, connected $3$--manifolds with boundary the $2$--torus $\partial T_n$. Notice that the inclusion induced map $\pi_1(\partial T_n) \longrightarrow \pi_1(C_n)$ is injective because $T_n$ is knotted. The inclusion $C_n \subseteq C_{n+1}$ also induces an injective homomorphism $\pi_1(C_n) \longrightarrow \pi_1(C_{n+1})$ because of (ii) (see for example \cite[Theorem 9, p. 113]{rolfsen1}).

Observe that $\mathbb{S}^3 \setminus K$ is the union of the sequence $C_1 \subseteq C_2 \subseteq \ldots$ Since loops and homotopies between them are compactly supported, $\pi_1(\mathbb{S}^3 \setminus K)$ is the direct limit of the groups $\pi_1(C_n)$ bonded by the inclusion induced homomorphism  $\pi_1(C_n) \longrightarrow \pi_1(C_{n+1})$. These are injective as mentioned above, so each inclusion induced homomorphism $\phi_n : \pi_1(C_n) \longrightarrow \pi_1(\mathbb{S}^3 \setminus K)$ is injective too, and setting $G_n := {\rm im}\ \phi_n$ we get an ascending sequence of subgroups of $\pi_1(\mathbb{S}^3 \setminus K)$ whose union is the whole group. By assumption $\pi_1(\mathbb{S}^3 \setminus K)$ is finitely generated, so $G_n = \pi_1(\mathbb{S}^3 \setminus K)$ for large $n$ and we have $G_n = G_{n+1} = \ldots$ Since the $\phi_n$ are injective, we conclude that the inclusion induced homomorphism $\pi_1(C_n) \longrightarrow \pi_1(C_{n+1})$ is surjective (in fact an isomorphism) for large $n$.

Let $D_n := T_{n+1} \setminus {\rm int}\ T_n$. We have $C_{n+1} = C_n \cup D_n$, where $C_n \cap D_n = \partial T_n$. Since $\pi_1(\partial T_n)$ injects in both $\pi_1(C_n)$ and $\pi_1(D_n)$ (again by the knottedness and satellite conditions), the Seifert--van Kampen theorem shows that $\pi_1(C_{n+1})$ is the amalgamated product of $\pi_1(C_n)$ and $\pi_1(D_n)$ with amalgamation given by the images of $\pi_1(\partial T_n)$ in these two groups via the inclusion induced homomorphism. Moreover the natural maps from each of the groups to the amalgamated product are just the inclusion induced maps. By the previous paragraph the natural map from $\pi_1(C_n)$ to $\pi_1(C_{n+1})$ is surjective, and then by the discussion in Subsection \ref{subsec:prelim_algebra} we see that the map $\pi_1(\partial T_n) \longrightarrow \pi_1(D_n)$ is surjective, so in fact it is an isomorphism. Then by a theorem of Brown (\cite[Theorem 3.4, p. 490]{brown3}) the inclusion $\partial T_n \subseteq D_n$ extends to a homeomorphism $(\partial T_n) \times [0,1] \cong D_n$. Hence the tori $T_{n+1} \subseteq T_n$ are concentric and $K$ is an attractor for a flow by the results in Subsection \ref{subsec:dynamics}.
\end{proof}

In \cite{hecyo1} we considered a toroidal set constructed by taking an infinite connected sum of (nontrivial) knots of progressively smaller size which accumulate onto a single point $p$. This produces a knot which is wild exactly at $p$. It can be shown that its genus is infinite and so it cannot be realized as an attractor for a homeomorphism. The following corollary, previously known only for flows, generalizes this example:

\begin{corollary} A toroidal knot whose set of wild points contains an isolated point cannot be realized as an attractor for a homeomorphism. In particular, it must have an infinite genus.
\end{corollary}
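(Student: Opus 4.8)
The plan is to reduce everything to the single topological statement that $g(K)=\infty$, and then to read off the conclusion from Theorem \ref{thm:charact}. Since $K$ is a toroidal set which is homeomorphic to a circle, its first \v{C}ech cohomology is $\check{H}^1(K;\mathbb{Z})=\check{H}^1(\mathbb{S}^1;\mathbb{Z})=\mathbb{Z}\neq 0$. If I can establish $g(K)=\infty$, then in particular $g(K)>0$, so $K$ is a \emph{knotted} toroidal set and Theorem \ref{thm:charact} applies. In that theorem condition (3) reads ``$\check{H}^1(K;\mathbb{Z})\neq 0$ and $g(K)<\infty$''; here the cohomology is nonzero but the genus is infinite, so (3) fails, hence the equivalent condition (2) fails as well. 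Thus $K$ is not an attractor for a homeomorphism, and the infinite--genus clause in the statement is exactly the fact $g(K)=\infty$ that drove the argument.

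It remains to prove $g(K)=\infty$, and this is where the wild point enters. Let $p$ be an isolated wild point, so there is a ball $B_0$ with $p\in\operatorname{int}B_0$ in whose interior every point of $K$ other than $p$ is locally flat and whose boundary meets $K$ transversally in two points. First I would produce a nested sequence $B_0\supseteq B_1\supseteq\cdots$ of balls shrinking to $p$, each $\partial B_i$ meeting $K$ transversally in two points, so that each arc $K\cap B_i$ is tame and each shell $B_i\setminus\operatorname{int}B_{i+1}$ carries a well-defined tame knot summand $k_i$ (obtained by closing the shell arc with a trivial arc on the bounding spheres). The key local lemma is then that \emph{infinitely many of the $k_i$ are nontrivial}.

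Granting the lemma, the lower bound on the genus goes as follows. Let $T$ be any solid torus neighbourhood of $K$. Using that $T$ is a thin regular neighbourhood of $K$ and that the shell arcs are tame, I would arrange by general position that each sphere $\partial B_i$ meets $T$ in a single meridian disk; cutting the core $C$ of $T$ along these spheres then exhibits $C$ as a connected sum whose summands include every $k_i$ above the ``thinness scale'' $m(T)$ of $T$. By additivity of the knot genus under connected sum, $g(C)\ge\#\{\,i\le m(T):k_i\text{ nontrivial}\,\}$; since $m(T)\to\infty$ as $T$ shrinks onto $K$ while infinitely many $k_i$ are nontrivial, the core genus is unbounded and $g(K)=\infty$.

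The main obstacle is the local lemma, and I would attack it by contraposition: if only finitely many $k_i$ were nontrivial, then for large $i$ the arc $K\cap B_i$ would be unknotted, and together with local flatness of $K$ away from $p$ one wants to conclude that $K$ is locally flat at $p$, contradicting wildness. The delicate point is that unknottedness of each individual small arc does not by itself yield local flatness (this is the Fox--Artin phenomenon), and closing this gap is precisely where the toroidal hypothesis must be used: the neighbourhood basis of solid tori supplies thin solid-tube neighbourhoods of the local arc, which provide the extra control needed to upgrade ``all small shells unknotted'' to genuine local flatness at $p$. As a dynamical sanity check, note that a finite genus would make $K$ an attractor for a flow, hence endow it with a neighbourhood basis of concentric solid tori, and this product structure likewise forces $K$ to be tame, contradicting wildness at $p$.
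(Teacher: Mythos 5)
There is a genuine gap, and you have in fact flagged it yourself: your entire argument funnels through the ``local lemma'' that infinitely many of the shell knots $k_i$ are nontrivial, and you never prove it. This is not a routine verification. Proving it by contraposition, as you propose, requires a local taming theorem of the form ``locally unknotted at $p$ plus toroidal implies locally flat at $p$,'' and your only remark on how the toroidal hypothesis would supply the missing control is an assertion, not an argument. Worse, the lemma itself is suspect as stated: wildness at an isolated point need not be detected by local knot summands at all (this is exactly the Fox--Artin phenomenon you mention), so a toroidal knot whose wildness at $p$ is of that type would have all $k_i$ trivial while the corollary still demands infinite genus --- the genus would then have to be forced by something other than connected-sum decomposition of cores, and your quantitative step $g(C)\ge\#\{i:k_i\text{ nontrivial}\}$ would give no information. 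Even granting the lemma, the claim that every sufficiently small solid torus neighbourhood $T$ can be isotoped so that each $\partial B_i$ meets it in a single meridian disk is delicate, since $K$ is wild and $T$ is merely a neighbourhood, not a regular neighbourhood.

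The paper's own proof runs in the opposite direction and is essentially a two-line reduction: the statement is already known for flows (a toroidal knot with an isolated wild point cannot be a flow attractor, by the cited Example 42 of \cite{mio5}), and Theorem \ref{thm:charact} transfers this to homeomorphisms; the infinite-genus clause then falls out of the contrapositive of (3)$\Rightarrow$(2), since $\check{H}^1(K;\mathbb{Z})=\mathbb{Z}\neq 0$. Your closing ``dynamical sanity check'' is in fact this argument, but there you also leave the key step unproved: that the concentric/product structure coming from a flow forces tameness at the isolated wild point is precisely the content of the external result the paper cites, not something one gets for free. So the workable route is the one you relegated to an aside, and it still needs either the citation or a proof of that taming step; the route you develop in detail rests on a lemma that is unproven and possibly false.
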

\begin{proof} The result is true when $K$ is an attractor for a flow (\cite[Example 42, p. 6178]{mio5}). The corollary then follows from the theorem above.
\end{proof}

\begin{corollary} 
Let $K$ be a toroidal set with $\check{H}^1(K;\mathbb{Z})\neq \mathbb{Z}$ that is an attractor for a homeomorphism of $\mathbb{R}^3$. Then $K$ is unknotted.
\end{corollary}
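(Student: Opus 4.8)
The plan is to argue by contradiction, reducing everything to the work already done in Theorem \ref{lem:unknotted}. Suppose, contrary to the desired conclusion, that $K$ is knotted, i.e. $g(K) > 0$. Then $K$ is a \emph{knotted} toroidal set which, by hypothesis, is an attractor for a homeomorphism of $\mathbb{R}^3$. This is precisely the situation governed by Theorem \ref{lem:unknotted}, so I would immediately conclude that $K$ is in fact an attractor for a flow. (One could equally cite the implication (2)$\Rightarrow$(1) of Theorem \ref{thm:charact}, but invoking Theorem \ref{lem:unknotted} directly is cleanest, and its hypotheses---knotted toroidal set plus attractor for a homeomorphism---are exactly what the contradiction hypothesis together with the standing assumption supply.)

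Next I would appeal to the structure of toroidal attractors for flows recorded in Subsection \ref{subsec:toratt}. There it is shown that whenever a toroidal set is an attractor for a flow one necessarily has $\check{H}^1(K;\mathbb{Z}) = \mathbb{Z}$: the cohomology isomorphism induced by $K \subseteq \mathcal{A}(K)$ is factored through a solid torus neighbourhood, leaving only the possibilities $0$ or $\mathbb{Z}$, and the vanishing case is excluded because it would force $K$ to be cellular, contradicting the definition of a toroidal set. Applying this to the flow-attractor $K$ produced in the previous step yields $\check{H}^1(K;\mathbb{Z}) = \mathbb{Z}$, which directly contradicts the standing hypothesis $\check{H}^1(K;\mathbb{Z}) \neq \mathbb{Z}$.

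Since the assumption that $K$ is knotted leads to a contradiction, it must be false, and therefore $K$ is unknotted, as claimed.

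As for difficulty, there is essentially no obstacle here: the entire substance of the argument has already been absorbed into Theorem \ref{lem:unknotted} (via the Kneser--Haken finiteness machinery) and into the cohomological analysis of toroidal flow-attractors. The only point demanding a moment's care is verifying the applicability of Theorem \ref{lem:unknotted}, namely that the set in question genuinely satisfies the \emph{knottedness} hypothesis---which is precisely what the contradiction assumption provides. Once that is noted, the corollary follows immediately.
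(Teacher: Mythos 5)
Your proof is correct and follows essentially the same route as the paper: assume $K$ is knotted, invoke Theorem \ref{lem:unknotted} (equivalently, (2)$\Rightarrow$(1) of Theorem \ref{thm:charact}) to get a flow realization, and derive the contradiction $\check{H}^1(K;\mathbb{Z})=\mathbb{Z}$ from the analysis of toroidal flow-attractors in Subsection \ref{subsec:toratt}. No gaps.
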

\begin{proof} If $K$ were knotted, by Theorem \ref{thm:charact} it would be an attractor for a flow and so it would have $\check{H}^1(K;\mathbb{Z}) = \mathbb{Z}$.
\end{proof}

A weaker result was proved in \cite{hecyo1}, in that it only contemplated toroidal attractors with a nonfinitely generated cohomology (the third kind described in p. \pageref{eq:coh}). The methods used there were based on the genus of a toroidal set and hinged on the relation between the genus of a satellite knot and its companion, which in fact becomes noninformative when $K$ has trivial cohomology. The geometric methods used in this paper are more powerful. As a specific example, consider modifying the construction of the Whitehead continuum as follows. Start with a knotted torus $T_0$. Then place a torus $T_1 \subseteq {\rm int}\ T_0$ in the self-linking pattern shown in the left panel of Figure \ref{fig:white} and keep repeating the construction to obtain a sequence $T_n$. (Unlike the original construction of the continuum, we do not claim that this can be done by iterating a homeomorphism of $\mathbb{R}^3$). The intersection $K := \bigcap_{n \geq 0} T_n$ is a knotted toroidal set which still has $\check{H}^1(K;\mathbb{Z}) = 0$, so by the corollary above it cannot be realized as an attractor for a homeomorphism of $\mathbb{R}^3$. (So in fact the construction cannot be done by iterating a homeomorphism of $\mathbb{R}^3$).

\section{Attractor-repeller decompositions and toroidal sets}\label{sec:att-rep}

Let $f:\mathbb{S}^3\longrightarrow\mathbb{S}^3$ be a homeomorphism and suppose that $K$ is an attractor for  $f$. Then $K'=\mathbb{S}^3\setminus\mathcal{A}(K)$ is an attractor for the backwards dynamics $f^{-1}$; i.e. a repeller for $f$ known as the dual repeller of $K$. The region of repulsion of $K'$ (for $f$) is defined to be its region of attraction for $f^{-1}$. Finally, the pair $(K,K')$ is called an attractor-repeller decomposition of $\mathbb{S}^3$. If $K$ is toroidal we can find a solid torus $T\subseteq\mathcal{A}(K)$ that is a neighbourhood of $K$, and then $\partial T$ is a  $2$-torus that separates $K$ and $K'$. In this section we study the converse situation, i.e., what can we say about an attractor $K$ and its dual repeller $K'$ if we know that they are separated by a $2$-torus? The following result gives a fairly complete description of what happens in this situation.

\begin{theorem}\label{thm:decomp}
Let $K\subseteq\mathbb{S}^3$ be an attractor for a homeomorphism and $K'$ its dual repeller. Suppose that there exists a $2$-torus in $\mathbb{S}^3$ that separates $K$ and $K'$. Then $\check{H}^1(K;\mathbb{Z})\cong\check{H}^1(K';\mathbb{Z})$ and precisely one of the following possibilities holds:
\begin{enumerate}
    \item $K$ and $K'$ are cellular.
    \item One of $K$ and $K'$ is a knotted toroidal set, while the other set is not toroidal.
    \item Both $K$ and $K'$ are unknotted toroidal sets.
\end{enumerate}
Moreover, in cases (1) and (2) the pair $(K,K')$ is an attractor-repeller decomposition of a flow on $\mathbb{S}^3$.
\end{theorem}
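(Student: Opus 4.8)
The plan is to reduce to a single solid‑torus neighbourhood basis on one side using Alexander's theorem, read off the trichotomy together with the cohomology isomorphism, and then build the flow by exhibiting an explicit product structure on $\mathbb{S}^3\setminus(K\cup K')$.

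\emph{Setup and reduction.} Let $\mathbb{T}$ be the separating torus, taken polyhedral by Moise. It lies in $\mathcal{A}(K)$ (it misses $K'$) and, symmetrically, in the region of repulsion $\mathbb{S}^3\setminus K$. Write $T$ for the closed side containing $K$ and $M':=\mathbb{S}^3\setminus\operatorname{int}T$ for the side containing $K'$. A single power of $f$ makes $T$ positively invariant and $M'$ positively invariant for $f^{-1}$ (the two conditions are equivalent, since $f^r(T)\subseteq\operatorname{int}T$ iff $f^{-r}(M')\subseteq\operatorname{int}M'$), so $A_n:=f^n(T)$ is a neighbourhood basis of $K$ and $B_n:=f^{-n}(M')=\mathbb{S}^3\setminus\operatorname{int}A_{-n}$ one of $K'$. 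Since $\mathbb{S}^3$ is simply connected every torus in it is compressible, hence bounds a solid torus on at least one side; as the whole statement is symmetric under $f\leftrightarrow f^{-1}$ (i.e. $K\leftrightarrow K'$), I may assume $T$ is that solid torus. Then $K$ is cellular or toroidal and all the windings of $A_{n+1}$ in $A_n$ equal a common $w$.

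\emph{Cohomology.} Now $\mathbb{S}^3\setminus K'=\mathcal{A}(K)=\bigcup_n A_{-n}$ is an increasing union of solid tori, so $H_1(\mathbb{S}^3\setminus K';\mathbb{Z})=\varinjlim H_1(A_{-n})$ is the direct limit of $\mathbb{Z}\xrightarrow{\,w\,}\mathbb{Z}\xrightarrow{\,w\,}\cdots$; by continuity of \v{C}ech cohomology this same system computes $\check H^1(K;\mathbb{Z})$. Alexander duality gives $\tilde H_1(\mathbb{S}^3\setminus K';\mathbb{Z})\cong\check H^1(K';\mathbb{Z})$, and combining the two yields $\check H^1(K;\mathbb{Z})\cong\check H^1(K';\mathbb{Z})$ in all three cases.

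\emph{Trichotomy.} I split on whether $T$ is knotted. If $T$ is unknotted then $M'$ is an unknotted solid torus as well, so both sets have unknotted solid‑torus bases, and I claim $K$ is cellular iff $K'$ is. Indeed $K$ is cellular iff some $A_m$ lies in a ball $B\subseteq\operatorname{int}T$; taking complements turns $A_m\subseteq B\subseteq T$ into $M'\subseteq(\mathbb{S}^3\setminus\operatorname{int}B)\subseteq f^{m}(M')$ with $\mathbb{S}^3\setminus\operatorname{int}B$ again a ball, and applying $f^{-m}$ shows $B_m$ lies in a ball in $M'$, i.e. $K'$ is cellular (the converse being symmetric). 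This gives case (1) or case (3). If instead $T$ is knotted, then $K$ is knotted toroidal (its genus is positive, finite and well defined for an attractor, so $K$ is not cellular), and it remains to show $K'$ is neither toroidal nor cellular. Otherwise $K'$ would have a ball or solid‑torus neighbourhood $V\subseteq\mathbb{S}^3\setminus K$ with some knot exterior $B_{n+1}\subseteq\operatorname{int}V$; the torus $\partial B_{n+1}$ is incompressible in $B_{n+1}$ but must compress inside $V$ (its group $\mathbb{Z}^2$ cannot inject into $\pi_1(V)$), so by the loop theorem its compressing disk lies in $V\setminus\operatorname{int}B_{n+1}\subseteq A_{-(n+1)}$ with boundary a meridian of the solid torus $A_{-(n+1)}$; cutting along that meridian disk produces a $3$--cell neighbourhood of $K$, forcing $K$ cellular — a contradiction. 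This is case (2), and the three cases are mutually exclusive.

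\emph{The flow.} In case (2) I rerun the argument of Theorem \ref{lem:unknotted}: as $T$ is knotted the tori $f^{-k}(\mathbb{T})$ are incompressible in $\mathbb{S}^3\setminus\operatorname{int}T$, so Theorem \ref{thm:kneserhaken} yields a concentric pair $A_{k_1},A_{k_2}$; by $f$--equivariance the whole period‑$(k_2-k_1)$ tower $\{A_{k_1+jp}\}_{j\in\mathbb{Z}}$ is concentric and exhausts $\mathbb{S}^3\setminus K'$ inward to $K$ and outward to $K'$, giving $\mathbb{S}^3\setminus(K\cup K')\cong\mathbb{T}^2\times\mathbb{R}$; the translation flow, slowed near $K$ and $K'$ as in Subsection \ref{subsec:dynamics}, realizes $(K,K')$. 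In case (1) I take disjoint polyhedral $3$--cells $B\ni K$ and $B'\ni K'$ inside $\operatorname{int}T$ and $\operatorname{int}M'$; cellularity with the Sch\"onflies theorem makes $B\setminus K$ and $B'\setminus K'$ products over $\mathbb{S}^2$, while $\mathbb{S}^3\setminus(\operatorname{int}B\cup\operatorname{int}B')\cong\mathbb{S}^2\times[0,1]$, so $\mathbb{S}^3\setminus(K\cup K')\cong\mathbb{S}^2\times\mathbb{R}$ and the same construction gives the flow. I expect the genuine difficulty to be the knotted case of the trichotomy — showing the dual repeller $K'$ fails to be toroidal — since that is exactly where the tension between the incompressibility of the knot‑exterior boundary and the compressibility it is forced to have inside a solid torus must be exploited through the loop theorem.
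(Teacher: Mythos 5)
Your overall architecture (Alexander's theorem to get a solid torus side $T$, iterating $T$ to get a neighbourhood basis, the complement trick for cellularity, the loop-theorem/compressibility argument against $K'$ being toroidal, Kneser--Haken for the product structure) matches the paper's, and your cohomology step is actually a nice simplification: computing $\check H^1(K')\cong \tilde H_1(\mathbb{S}^3\setminus K')=\varinjlim H_1(A_{-n})$ directly by Alexander duality avoids the paper's relative-duality computation with winding numbers of complements. However, there is a genuine gap in the trichotomy. You case on whether the torus $T$ produced by Alexander's theorem is knotted, and in the knotted case you assert that ``$K$ is knotted toroidal (its genus is positive\ldots so $K$ is not cellular).'' This inference is false: the knottedness of one particular solid torus neighbourhood says nothing about the intrinsic type of $K$. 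For example, if $K$ is an attracting fixed point and $K'=\{\infty\}$ a repelling fixed point, any knotted polyhedral solid torus around $K$ separates $K$ from $K'$ and becomes positively invariant after passing to a power, yet $K$ is cellular, not knotted toroidal. (Genus is only defined for toroidal sets, and even for a toroidal $K$ a single knotted neighbourhood does not force $g(K)>0$, since smaller neighbourhoods could be unknotted.) As a consequence your proof never treats the sub-cases ``$T$ knotted but $K$ cellular'' and ``$T$ knotted but $K$ unknotted toroidal,'' and the ``contradiction'' you derive at the end of the knotted case --- that $K$ would be cellular --- is not a contradiction at all in the first of these sub-cases; it is exactly what happens, and the correct conclusion there is alternative (1), not (2).

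The repair is to case on the intrinsic type of $K$ (cellular, unknotted toroidal, knotted toroidal --- exhaustive because $\{A_n\}$ is a solid-torus neighbourhood basis), as the paper does: when $K$ is cellular replace $T$ by a polyhedral ball neighbourhood and run your complementation argument with Sch\"onflies; when $K$ is unknotted toroidal replace $T$ by a sufficiently small unknotted solid torus neighbourhood (which still separates $K$ from $K'$) so that its complement is again an unknotted solid torus; only when $K$ is genuinely knotted toroidal is $T$ necessarily knotted and does your loop-theorem argument apply, now with a real contradiction since cellularity of $K$ is excluded by hypothesis rather than by fiat. Your constructions of the flow in cases (1) and (2) via explicit product structures on $\mathbb{S}^3\setminus(K\cup K')$ are fine (the paper instead quotes Garay's theorem, Theorem \ref{lem:unknotted} and a basin-adjustment result), but they are only reached after the trichotomy is correctly established.
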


\begin{proof}
Let $S$ be a $2$--torus that separates $K$ and $K'$. Notice that by \cite[Claim~2, p. 10393]{hecyo1} we may assume that $S$ is a polyhedral torus. Then, a classical theorem of Alexander (\cite[1., p. 107]{rolfsen1}) ensures that one of the components $U$ of $\mathbb{S}^3 \setminus  S$ satisfies that $T=\overline{U}$ is a polyhedral solid torus whose boundary is $S$. This solid torus $T$ must contain in its interior either $K$ or $K'$. We assume that $T$ contains $K$, the other case being completely analogous. Since $T$ does not intersect $K'$ it follows that $T\subseteq\mathcal{A}(K)$ and, hence, maybe after replacing $f$ by a suitable power, we have that $f(T)\subseteq{\rm int}\; T$. Then, if we define $T_n:=f^{n}(T)$ we have that $\{T_n\}_{n\geq 0}$ is a basis of neighbourhoods of $K$ comprised of solid tori. This ensures that $K$ is either a cellular set or a toroidal set. 

Suppose first that $K$ is cellular. Then, there exists a polyhedral closed ball $B\subseteq T$ that is a neighbourhood of $K$. Then, by the Sch\"{o}enflies theorem for bicollared spheres we have that $B'=\mathbb{S}^3\setminus{\rm int}\;B$ is a polyhedral closed ball and $B'$ is a neighbourhood of $K'$ contained in its region of repulsion. By iterating $B'$ backwards we obtain a basis of neighbourhoods of $K'$ comprised of closed balls. It follows that $K'$ is also cellular.

 Suppose now that $K$ is an unknotted toroidal set. We may assume without loss of generality that $T$ is unknotted. Then $T'=\mathbb{S}^3\setminus{\rm int}\; T$ is an unknotted solid torus  that is a neighbourhood of $K'$ contained in its basin of repulsion and $f^{-1}(T')\subseteq {\rm int}\; T'$. Then, by the discussion in Subsection \ref{subsec:plapprox} $K'$ is a repeller of a homeomorphism $h$ that is piecewise linear in $\mathbb{S}^3\setminus K'$ and such that $T'$ is in the basin of repulsion of $K'$ and $h^{-1}(T')\subseteq{\rm int}\; T'$. By iterating backwards $T'$ with $h$, we obtain a basis of neighbourhoods of $K'$ comprised of unknotted polyhedral solid tori. Observe that $K'$ cannot be cellular since otherwise the previous arguments would ensure the cellularity of $K$ in contradiction with the assumption. Therefore, $K'$ is also an unknotted toroidal set.  
 
Finally we assume that $K$ is a knotted toroidal set and show that $K'$ is not a toroidal set. We argue by contradiction. Suppose that $K'$ is toroidal and let $T''\subseteq\mathbb{S}^3\setminus T\subseteq\mathcal{R}(K')$ be a polyhedral solid torus that is a neighbourhood of $K'$. Choosing $h$ as before we have that there exists $k>0$ such that $T''\subseteq {\rm int}\; h^k(T'')$ and $h^k(\partial T'')\subseteq {\rm int}\; T$. The second inclusion comes from the fact that the construction of $h$ ensures that $T$ is a neighbourhood of the dual attractor of $K'$ for this new $h$ contained in its basin of attraction. Notice that $h^k(\partial T'')$ is compressible in ${\rm int}\; T$ since otherwise we would have an injection of $\pi_1(h^k(\partial T''))=\mathbb{Z}\oplus\mathbb{Z}$ into $\pi_1(T)=\mathbb{Z}$. Since $T\subseteq\mathbb{S}^3\setminus{\rm int}\; T''$ it follows that $h^k(\partial T'')$ is also compressible in $\mathbb{S}^3\setminus{\rm int}\; T''$ and we have that either $T''$ is contained in a ball inside $h^k(T'')$ or $h^k(T'')$ is an unknotted solid torus. In the first case we would have that $K'$ and $K$ are both cellular and in the second case we would have that $K$ and $K'$ are both unknotted toroidal sets in contradiction with the assumption. Therefore $K'$ cannot be a toroidal set.
 
Observe that if $K$ is cellular or a knotted toroidal set, \cite[Theorem~1.1]{garay1} and Theorem~\ref{lem:unknotted} ensure that $K$ is an attractor for a flow and, by \cite[Theorem~45]{mio5}  $K$ is an attractor for a flow with basin of attraction $\mathcal{A}(K)=\mathbb{S}^3\setminus K'$. Therefore, $(K,K')$ is an attractor-repeller  decomposition for this flow. 

We conclude by showing that $\check{H}^1(K;\mathbb{Z})\cong\check{H}^1(K',\mathbb{Z})$. In case that $K$ and $K'$ are cellular the isomorphism follows because both $K$ and $K'$ have trivial first \v Cech cohomology group. Suppose then that at least one of them, say $K$, is a toroidal set (if $K'$ is toroidal the argument is completely analogous). As done several times before, construct a neighbourhood basis $\{T_n := f^n(T)\}$ of $K$ by iterating some solid torus $T \subseteq \mathcal{A}(K)$ which is a neighbourhood of $K$ (and possibly replacing $f$ with a power so that the $T_n$ are nested). Observe that then $\{\mathbb{S}^3 \setminus {\rm int}\; f^{-n}(T)\}$ is a nested neighbourhood basis of $K'$, the dual repeller of $K$. Although these sets need not be tori (this will happen if and only if $T$ is unknotted), by Alexander duality they have the cohomology of a solid torus. Hence, for each $n$ the inclusion $\mathbb{S}^3 \setminus {\rm int}\; f^{-n}(T)\subseteq\mathbb{S}^3 \setminus {\rm int}\; f^{-n-1}(T)$ induces in $H^1$ multiplication by a non-negative integer $w_n$ (by choosing the right orientations) that we shall also refer to as the winding number. We shall make use of the following fact:


{\it Claim.} Let $T'\subseteq T\subseteq\mathbb{S}^3$ be a pair of nested solid tori. Then winding number of $T'$ inside $T$ coincides with the winding number of $\mathbb{S}^3\setminus{\rm int}\; T$ inside $\mathbb{S}^3\setminus {\rm int}\; T'$.

{\it Proof of Claim.} Let $w$ be the winding number of $T'$ inside $T$. It follows from the long exact sequence of cohomology of the pair $(T,T')$ that $H^2(T,T';\mathbb{Z})= \mathbb{Z}_w$ (integers modulo $w$). By Alexander duality it follows that 
\[
\mathbb{Z}_w=H^2(T,T';\mathbb{Z})\cong H_1(\mathbb{S}^3\setminus T',\mathbb{S}^3\setminus T;\mathbb{Z}).
\]
Taking this into account in the long exact sequence of homology of the pair $(\mathbb{S}^3\setminus T',\mathbb{S}^3\setminus T)$ it follows that the inclusion $\mathbb{S}^3\setminus T\subseteq \mathbb{S}^3\setminus T'$ induces the multiplication by $w$ in $H_1$ and, by the Universal Coefficient Theorem, also in $H^1$. The claim follows from the fact that if $M$ is a compact manifold with boundary, the inclusion ${\rm int}\; M\subseteq M$ induces isomorphisms in cohomology. $_\blacksquare$

As a consequence of the claim, to prove that $\check{H}^1(K)$ and $\check{H}^1(K')$ are isomorphic, it is sufficient to see that for each $n\geq 0$, the winding number of $\mathbb{S}^3\setminus{\rm int}\; f^{-n-1}(T)$ inside $\mathbb{S}^3\setminus{\rm int}\; f^{-n}(T)$ coincides with the winding number of $\mathbb{S}^3\setminus{\rm int}\; f^{n}(T)$ inside $\mathbb{S}^3\setminus{\rm int}\; f^{n+1}(T)$. But this follows in a straightforward way from the fact that the homeomorphism $f^{2n+1}$ maps the former pair onto the latter.

\end{proof}

The theorem above corrects \cite[Theorem 3.9, p. 10393]{hecyo1}, which stated that when $\check{H}^1(K) \neq \mathbb{Z}$ case (3) must occur. In the course of the proof (namely, in Claim 3) implicit use was made of the assumption that $\check{H}^1(K) \neq 0$ as well, which was not stated explicitly in the theorem. Of course, this additional assumption rules out case (1) in Theorem \ref{thm:decomp} above, leaving only (3).

In case that the $2$-torus $S$ is polyhedral, the three cases described in Theorem~\ref{thm:decomp} can be studied in terms of the compressibility properties of $S$ in $\mathcal{A}(K)\setminus K$ and $\mathbb{R}^3\setminus K$. In particular, we have that the case (1) is equivalent to $S$ being compressible in $\mathcal{A}(K)\setminus K$. Situation (2) occurs whenever $S$ is incompressible in $\mathbb{S}^3\setminus K$ and (3) happens whenever $S$ is compressible in  $\mathbb{S}^3\setminus K$ while incompressible in $\mathcal{A}(K)\setminus K$. From these considerations the next result follows.

\begin{corollary}
Let $K\subseteq\mathbb{S}^3$ be an attractor for a homeomorphism and $K'$ its dual repeller. Suppose that $K$ is a toroidal set. Then there exists a polyhedral $2$-torus in $\mathbb{S}^3$ that separates $K$ and $K'$ and  is incompressible in $\mathcal{A}(K)\setminus K$. Conversely, if  there exists a polyhedral $2$-torus in $\mathbb{S}^3$ that separates $K$ and $K'$ and is incompressible in $\mathcal{A}(K)\setminus K$ then at least one between $K$ and $K'$ is toroidal.
\end{corollary}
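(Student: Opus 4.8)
The plan is to prove both implications by combining Alexander's theorem on tori in $\mathbb{S}^3$ with the elementary compressibility criteria for the boundary of a solid torus recorded in the second and third examples of Subsection~\ref{subsec:3mfds}. The governing principle I would isolate first is the following. If $S$ is a polyhedral torus separating $K$ and $K'$, then by Alexander's theorem \cite[1., p. 107]{rolfsen1} one of the two complementary domains has closure a solid torus $T$, with $\partial T = S$, enclosing exactly one of the two sets, say $L \in \{K,K'\}$. Iterating $T$ with the dynamics (forwards if $L=K$, backwards if $L=K'$, after passing to a suitable power so that $T$ becomes positively invariant for the relevant map) produces a neighbourhood basis of $L$ by solid tori, so $L$ is either cellular or toroidal. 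Moreover the whole $T$-side is contained in $\mathcal{A}(K)\setminus K = \mathbb{S}^3\setminus(K\cup K')$, and a compressing disk for $S$ on that side is necessarily a meridional disk of $T$ avoiding $L$; by the second example of Subsection~\ref{subsec:3mfds}, together with iteration of the dynamics, such a disk exists if and only if $L$ is cellular.

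The converse implication is the easy one and uses only the $T$-side. Assume $S$ is a polyhedral separating torus incompressible in $\mathcal{A}(K)\setminus K$, and let $T$ and $L$ be as above. Incompressibility forbids any compressing disk on the $T$-side, so by the principle above $L$ cannot be cellular; since $L$ is cellular or toroidal, it is toroidal. As $L$ is one of $K,K'$, at least one between $K$ and $K'$ is toroidal, as claimed.

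For the direct implication, suppose $K$ is toroidal and choose a polyhedral solid torus neighbourhood $T\subseteq\mathcal{A}(K)$ of $K$, taken so small that it is unknotted when $g(K)=0$ and knotted when $g(K)\geq 1$ (possible since every sufficiently small torus neighbourhood of a knotted toroidal set is knotted). Then $S:=\partial T$ separates $K$ from $K'$, and I must check incompressibility of $S$ in $\mathcal{A}(K)\setminus K$ on both sides. On the inside $T\setminus K$, a compressing disk would be a meridional disk of $T$ missing $K$ and would force $K$ to be cellular, contradicting toroidality; so there is none. On the outside, writing $X:=\mathbb{S}^3\setminus{\rm int}\,T$ (which contains $K'$ in its interior), the case $T$ knotted is immediate: by the third example of Subsection~\ref{subsec:3mfds} the boundary $\partial X = S$ is already incompressible in $X$, hence a fortiori in $X\setminus K'$.

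The main obstacle is the remaining case, in which $T$ (and hence $K$) is unknotted, for then $X$ is a solid torus whose boundary does compress in $X$, and one must show that no such compressing disk can avoid $K'$. Here I would argue by contradiction: a meridional disk of $X$ missing $K'$ would, by the second example of Subsection~\ref{subsec:3mfds}, place $K'$ inside a ball $B'\subseteq X$ with $K'\subseteq{\rm int}\,B'$; then $\mathbb{S}^3\setminus{\rm int}\,B'$ is a ball (by the Sch\"onflies theorem for the sphere $\partial B'$) which is a neighbourhood of $K$ contained in $\mathcal{A}(K)$, whence iterating the dynamics would make $K$ cellular, contradicting its toroidality. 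Thus no outside compressing disk exists in this case either, so $S$ is incompressible in $\mathcal{A}(K)\setminus K$. (Alternatively one may invoke Theorem~\ref{thm:decomp}: since $K$ is an unknotted toroidal set, the existence of the separating torus places the pair in case~(3), so $K'$ is toroidal and in particular not cellular, which is exactly what is needed.) Combining the two directions completes the proof.
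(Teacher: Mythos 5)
Your argument is correct and follows essentially the same route as the paper, which derives this corollary from the observation preceding it that, for a polyhedral separating torus $S$, compressibility of $S$ in $\mathcal{A}(K)\setminus K$ is equivalent to case (1) of Theorem \ref{thm:decomp} (both sets cellular); the paper leaves the details implicit, and your write-up simply supplies them via Alexander's theorem, the meridional-disk criteria of Subsection \ref{subsec:3mfds}, and the Sch\"onflies/iteration arguments already used in the proofs of Theorems \ref{lem:unknotted} and \ref{thm:decomp}. No gaps.
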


\section{Beyond toroidal sets} \label{sec:generalize}

The techniques used above can in fact be generalized for arbitrary attractors $K$ in $\mathbb{R}^3$ (or more general $3$--manifolds). As far as the realization problem is concerned we may concentrate on connected sets which do not separate $\mathbb{R}^3$. This can be justified as follows. First, we already mentioned that any attractor has finitely many connected components each of which is an attractor (possibly for a suitable power of the dynamics), so a necessary condition for a compactum to be realizable as an attractor is that each of its components can be so realized. Also, if an attractor $K$ separates $\mathbb{R}^3$ then it also separates $\mathbb{S}^3$ and does so in finitely many connected components $U_1, \ldots, U_r$ because $\check{H}^2(K;\mathbb{Z}_2)$ is finitely generated. Then, and perhaps after replacing the dynamics with a suitable power that leaves each $U_i$ invariant, $K \cup \bigcup_{j \neq i} U_j$ (which is the result of capping all but one of the cavities in $K$) is an attractor. Thus for a separating compactum $K$ to be realizable as an attractor it is necessary that each of the nonseparating compacta obtained by succesively capping its holes be realizable as an attractor as well.

In the case of knotted toroidal attractors all our arguments hinged on choosing a torus $T$ in $\mathcal{A}(K)$ whose boundary was incompressible in $\mathbb{S}^3 \backslash K$ or, equivalently, incompressible both in $\mathcal{A}(K) \backslash K$ and $\mathbb{S}^3 \backslash {\rm int}\ T$. The first was ensured by the non-cellularity condition on $K$ whereas the second reflected the fact that $K$ is knotted. In the general case we can only achieve incompressibility in $\mathcal{A}(K) \backslash K$. This is the content of the following lemma:

\begin{lemma} \label{lem:incompressible}
Let $K\subseteq\mathbb{R}^3$ be an attractor for a homeomorphism. Assume that $K$ is connected and does not separate $\mathbb{R}^3$. Then $K$ has a compact neighbourhood $N\subseteq\mathcal{A}(K)$ which is a  $3$-manifold such that $\partial N$ is connected and incompressible in $\mathcal{A}(K) \backslash K$.
\end{lemma}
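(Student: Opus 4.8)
The plan is to first achieve incompressibility by a compression argument carried out entirely inside $\mathcal{A}(K)\setminus K$, and then to repair connectedness of the boundary using the fact that $K$ is nonseparating. For the setup I would start, as in Subsection \ref{subsec:plapprox}, with a connected polyhedral compact $3$--manifold neighbourhood $N_0\subseteq\mathcal{A}(K)$ of $K$ (no positive invariance is needed). If $\partial N_0$ is compressible in $\mathcal{A}(K)\setminus K$, a compressing disk $D\subseteq\mathcal{A}(K)\setminus K$ lies either in ${\rm int}\ N_0\setminus K$ or in $\mathcal{A}(K)\setminus{\rm int}\ N_0$; in the first case I compress inward, setting $N_0':=N_0\setminus{\rm int}\,\nu(D)$, and in the second I compress outward, setting $N_0':=N_0\cup\nu(D)$. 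Since $D$ avoids $K$ and lies in $\mathcal{A}(K)$, in both cases the new $N_0'$ is a compact $3$--manifold neighbourhood of $K$ still contained in $\mathcal{A}(K)$, with $\chi(\partial N_0')=\chi(\partial N_0)+2$. By the finiteness of the compression process recorded in Subsection \ref{subsec:3mfds}, after finitely many steps every component of the boundary is incompressible in $\mathcal{A}(K)\setminus K$; discarding the components of the resulting manifold not containing $K$ and keeping the one that does, I obtain a connected $N_1\subseteq\mathcal{A}(K)$ with $K\subseteq{\rm int}\ N_1$ and every component of $\partial N_1$ incompressible in $\mathcal{A}(K)\setminus K$.

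The remaining task is to make $\partial N_1$ connected without losing incompressibility, and this is where the nonseparating hypothesis enters. Since $K$ is connected and nonseparating we have $H_2(\mathcal{A}(K);\mathbb{Z}_2)=0$ by Subsection \ref{subsec:dynamics}; applying Alexander duality in $\mathbb{S}^3$ to the compact dual repeller $K':=\mathbb{S}^3\setminus\mathcal{A}(K)$ gives $\widetilde{\check{H}}{}^0(K';\mathbb{Z}_2)\cong\widetilde{H}_2(\mathcal{A}(K);\mathbb{Z}_2)=0$, so $K'$ is connected, and it contains $\infty$ because $\infty$ is a fixed point outside $\mathcal{A}(K)$. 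Now let $V_\infty$ be the component of $\mathbb{S}^3\setminus{\rm int}\ N_1$ containing $\infty$. Any other (bounded) complementary component $V$ of $N_1$ must satisfy $V\subseteq\mathcal{A}(K)$: otherwise $V$ would meet $K'$, and since $K'\subseteq\mathbb{S}^3\setminus{\rm int}\ N_1$ already meets $V_\infty$ at $\infty$, the connected set $K'$ would be split between two distinct complementary components, a contradiction. Hence I may fill in all bounded complementary components, defining $N:=\mathbb{S}^3\setminus V_\infty$; this is a connected compact $3$--manifold neighbourhood of $K$, still contained in $\mathcal{A}(K)$, whose complement $\overline{V_\infty}$ is connected. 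A Mayer--Vietoris argument for $\mathbb{S}^3=N\cup\overline{V_\infty}$ (using $\widetilde{H}_1(\mathbb{S}^3)=0$ and that both pieces are connected) then forces $\partial N$ to be connected. Finally, filling the bounded holes only deletes boundary components and does not alter the ambient manifold $\mathcal{A}(K)\setminus K$, so the surviving boundary $\partial N=\partial V_\infty$ remains incompressible; being connected, it is a single incompressible surface, as required.

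The main obstacle is precisely the passage from ``each component incompressible'' to ``connected incompressible boundary'', since a naive attempt to fill complementary holes can push $N$ out of the basin $\mathcal{A}(K)$. The crux is the observation that the dual repeller is connected, which I expect to be the one genuinely nonroutine input; it converts the nonseparating hypothesis into the statement that no bounded complementary region can contain points outside $\mathcal{A}(K)$, making the filling operation legitimate. The compression step is standard, but two points deserve care in the write-up: that each compressing disk can be isotoped to lie on a definite side of $\partial N$ so that the inward/outward dichotomy is exhaustive, and that an incompressible boundary \emph{sphere} is permitted (this is exactly the cellular case, where $\partial N\cong\mathbb{S}^2$ fails to bound a ball in $\mathcal{A}(K)\setminus K$ because the ball on the $V_\infty$ side contains the repeller point $\infty$).
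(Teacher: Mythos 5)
Your proposal is correct and follows essentially the same route as the paper: compress $\partial N$ as far as possible inside $\mathcal{A}(K)\setminus K$, keep the component containing $K$, and then use the absence of cavities in $\mathcal{A}(K)$ (i.e.\ $H_2(\mathcal{A}(K);\mathbb{Z}_2)=0$) to pass to a neighbourhood with connected incompressible boundary. The only cosmetic differences are that you obtain the final $N$ by filling the bounded complementary components of $N_1$ and deduce connectedness of $\partial N$ via Mayer--Vietoris and the connectedness of the dual repeller, whereas the paper selects the outermost bounded region $E_{i_0}$ containing $K$ and checks $E_{i_0}\subseteq\mathcal{A}(K)$ directly -- both constructions produce the same set and rest on the same duality input, and your explicit treatment of the $2$--sphere case is a welcome extra detail.
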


We make use of the following fact. Suppose $U \subseteq \mathbb{R}^3 \subseteq \mathbb{S}^3$ is open and connected and $S \subseteq U$ is a closed connected surface. If $H_2(U;\mathbb{Z}_2) = 0$, then the bounded component of $\mathbb{R}^3 \backslash S$ is contained in $U$. This is intuitively clear but can be proved formally as follows. Since $H_2(U;\mathbb{Z}_2) = 0$, Alexander duality shows that $\mathbb{S}^3 \backslash U$ is connected. Also $S$, being a closed connected surface, separates $\mathbb{S}^3$ into two connected components $V_1$ and $V_2$; say that $V_1$ is the component that contains $\infty$ (and so $V_2$ is in fact the bounded component of $\mathbb{R}^3 \backslash S$). The connected set $\mathbb{S}^3 \backslash U$ is contained in $\mathbb{S}^3 \backslash S = V_1 \uplus V_2$, so it must be contained in one of the components $V_i$; since $\mathbb{S}^3 \backslash U$ contains $\infty$, it must be $\mathbb{S}^3 \backslash U \subseteq V_1$. Thus $\mathbb{S}^3 \backslash V_1 \subseteq U$. But $\mathbb{S}^3 \backslash V_1 = V_2 \cup S$, which proves $V_2 \subseteq U$.

\begin{proof} Start with any polyhedral compact $3$--manifold $N$ which is a neighbourhood of $K$ contained in $\mathcal{A}(K)$.

Now compress $\partial N$ in $\mathcal{A}(K) \backslash K$ as much as possible. Compressing along a disk $D$ contained in $N \backslash K$ may disconnect $N$. Compressing along a disk $D$ contained in $\mathcal{A}(K) \backslash {\rm int}\ N$ is equivalent to pasting a $2$--handle onto $N$ and may disconnect its boundary. In either case the process is finite and produces a new $N$ whose boundary is perhaps not connected but each of whose components is incompressible or a $2$--sphere. If $N$ is not connected, we discard all of its components except for the one that contains $K$.

It remains to prove that we can take $\partial N$ to be connected. The basic idea is to exploit that $\mathcal{A}(K)$ has no cavities and then take the ``outermost'' component of $\partial N$. We formalize this as follows.

Denote by $D_1, \ldots, D_r$ the components of the boundary of $\partial N$ and by $E_i$, $F_i$ the closures of the bounded and unbounded components of $\mathbb{R}^3 \backslash D_i$ respectively. Observe that for each $i$ the attractor $K$ is contained in either $E_i$ or $F_i$.

(i) Claim: at least one $E_i$ contains $K$. 

{\it Proof of claim.} since $N$ is connected, for each $i$ we have that $N$ is contained in either $E_i$ or $F_i$, and in fact $N$ is the intersection of the corresponding $E_i$ or $F_i$. Now if the claim above were false, then we would have $N = \bigcap_i F_i$. However, for large enough $r_0$ we have that each $F_i$ contains the sphere of radius $r$ for all $r \geq r_0$, and so ${\rm int}\ N$ would be unbounded, which contradicts the fact that $N$ is compact.

(ii) Claim: if $E_i$ and $E_j$ contain $K$, then one is contained in the other.

{\it Proof of claim.} This is clear: both are connected sets and their frontiers $D_i$ and $D_j$ are connected and disjoint, so one must be contained in the other.

In fact it can be shown that only one $E_i$ can contain $K$, but we shall not need that. It suffices to look at all the $E_i$ that contain $K$ (there is at least one by the first claim) and take the biggest one (this is meaningful by the second claim). This $E_{i_0}$ is a compact $3$--manifold neighbourhood of $K$ with boundary $D_{i_0}$, which is incompressible in $\mathcal{A}(K) \backslash K$. It remains to show that it is contained in $\mathcal{A}(K)$. The inclusion $K \subseteq \mathcal{A}(K)$ induces isomorphisms in \v{C}ech cohomology in all dimensions (\cite{pacoyo1}). Since $K$ is nonseparating, $\check{H}^2(K;\mathbb{Z}_2) = 0$ and so $H^2(\mathcal{A}(K);\mathbb{Z}_2) = 0$ as well. By the universal coefficient theorem, $H_2(\mathcal{A}(K);\mathbb{Z}_2) = 0$. Then the fact stated before the proof leads to $E_{i_0} \subseteq \mathcal{A}(K)$.
\end{proof}

\begin{remark}
    Lemma~\ref{lem:incompressible} can be proved in an easier way by invoking \cite[Proposition~1.2]{Swarup}. This result ensures that if a non-compact orientable $3$-manifold $M$ without boundary has at least two ends, then it contains a connected incompressible surface that does not bound a region inside $M$. Notice that $\mathcal{A}(K)\setminus K$ is orientable, does not have boundary and has exactly two ends. The latter follows taking into account that, as we discussed previously, $H_2(\mathcal{A}(K),\mathbb{Z}_2)=0$ which ensures that $\mathbb{S}^3\setminus \mathcal{A}(K)$ is connected. Hence, $\mathbb{S}^3\setminus(\mathcal{A}(K)\setminus K)=(\mathbb{S}^3\setminus\mathcal{A}(K))\cup K$ has two connected components that are in correspondence with the ends of $\mathcal{A}(K)\setminus K$. Then, by \cite[Proposition~1.2]{Swarup} there exists a connected incompressible surface $S$ that does not bound a region inside $\mathcal{A}(K)\setminus K$. The desired neighbourhood $N$ is just the closure of the bounded component of $\mathbb{R}^3\setminus S$ that is contained in $\mathcal{A}(K)$ by the observation before the proof of Lemma~\ref{lem:incompressible}.

    In spite of the fact that this proof is easier that the given proof of Lemma~\ref{lem:incompressible} as is presented here, we believe that our proof suits better our purposes because we give an explicit construction of $N$ from any $3$-manifold neighbourhood of $K$ contained in $\mathcal{A}(K)$. In addition we will refer to this construction later. 
\end{remark}

A consequence of Lemma~\ref{lem:incompressible} is that cellularity of an attractor is characterized by the simple connectivity of its complement in its region of attraction.

\begin{corollary}\label{coro:cell}
Let $K\subseteq\mathbb{R}^3$ be an attractor for a homeomorphism. Assume that $K$ is connected and does not separate $\mathbb{R}^3$. The following are equivalent: 
\begin{enumerate}
    \item $K$ is cellular.
    \item $\mathcal{A}(K)\setminus K$ is simply connected.
\end{enumerate}
\end{corollary}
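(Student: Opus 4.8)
The plan is to prove the two implications separately, drawing on Lemma~\ref{lem:incompressible} for the implication (2)$\Rightarrow$(1) and on the attracting dynamics for (1)$\Rightarrow$(2). Throughout I use that $K$ is connected and nonseparating, so that (as recorded in Subsection~\ref{subsec:dynamics}) $H_2(\mathcal{A}(K);\mathbb{Z}_2)=0$ and $\mathcal{A}(K)\setminus K$ is a connected open $3$--manifold.

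For (2)$\Rightarrow$(1) I would begin by invoking Lemma~\ref{lem:incompressible} to obtain a compact polyhedral $3$--manifold neighbourhood $N\subseteq\mathcal{A}(K)$ of $K$ whose boundary $\partial N$ is connected and incompressible in $\mathcal{A}(K)\setminus K$. Now $\partial N$ is a closed connected orientable surface sitting two-sidedly in the interior of $\mathcal{A}(K)\setminus K$. If $\partial N$ were not a $2$--sphere, the algebraic characterization of incompressibility via the loop theorem (the statement at the end of Subsection~\ref{subsec:3mfds}) would force the inclusion-induced homomorphism $\pi_1(\partial N)\to\pi_1(\mathcal{A}(K)\setminus K)$ to be injective; but the target is trivial by hypothesis~(2), whereas $\pi_1(\partial N)\neq 1$ for a positive-genus surface, a contradiction. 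Hence $\partial N$ is a polyhedral $2$--sphere, and since $N$ is (by the construction in the proof of Lemma~\ref{lem:incompressible}) the closure of the bounded complementary region of $\partial N$, the Sch\"onflies theorem shows that $N$ is a $3$--cell. Finally, after replacing $f$ by a suitable power so that $N$ becomes positively invariant, the forward iterates $\{f^m(N)\}_{m\geq 0}$ form a neighbourhood basis of $K$ by the attraction property, and each is a $3$--cell since it is homeomorphic to $N$; therefore $K$ is cellular.

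For (1)$\Rightarrow$(2) I would show that every loop in $\mathcal{A}(K)\setminus K$ is nullhomotopic there. Fix such a loop $\gamma$. Because $K$ is cellular we may choose a nested neighbourhood basis of $3$--cells contained in $\mathcal{A}(K)$; let $N$ be one of them. Since $\gamma$ is compact and $K$ attracts compacta, some forward iterate satisfies $f^n(\gamma)\subseteq{\rm int}\ N$. As $f^n(\gamma)$ is compact and disjoint from the invariant set $K$, we may pick a smaller cell $N'$ from the basis with $K\subseteq{\rm int}\ N'\subseteq N'\subseteq{\rm int}\ N$ and $N'\cap f^n(\gamma)=\varnothing$, so that $f^n(\gamma)$ lies in the shell $A:=N\setminus{\rm int}\ N'\subseteq\mathcal{A}(K)\setminus K$. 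The shell $A$ is simply connected: writing $N=A\cup N'$ with $A\cap N'=\partial N'\cong\mathbb{S}^2$ and applying the Seifert--van Kampen theorem through a collar gives $\pi_1(N)\cong\pi_1(A)\ast\pi_1(N')$, whence $\pi_1(A)=1$ because $\pi_1(N)=\pi_1(N')=1$. Thus $f^n(\gamma)$ is nullhomotopic in $A$, hence in $\mathcal{A}(K)\setminus K$; applying the homeomorphism $f^{-n}$, which preserves $\mathcal{A}(K)\setminus K$ and nullhomotopy, shows that $\gamma$ itself is nullhomotopic, giving~(2).

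Since Lemma~\ref{lem:incompressible} is already available, the essential work in direction (2)$\Rightarrow$(1) is packaged into it, and the remaining step—turning a connected incompressible boundary into a cellular neighbourhood—is clean once one correctly invokes the incompressibility characterization and Sch\"onflies. I therefore expect the more delicate point to be in (1)$\Rightarrow$(2): one must use the dynamics to pull an arbitrary, possibly wildly winding, loop close to $K$ and then trap it inside a simply connected region, the crux being the van Kampen computation that the region between two nested $3$--cells is simply connected. Some care is also needed to guarantee that the small cell $N'$ can be chosen from the cellular basis so as to simultaneously contain $K$ and miss the fixed compact set $f^n(\gamma)$, which is where the nestedness of the basis and the compactness of $f^n(\gamma)$ are used.
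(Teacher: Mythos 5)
Your proof is correct. The direction (2)$\Rightarrow$(1) is essentially the paper's own argument: invoke Lemma~\ref{lem:incompressible}, use the loop-theorem characterization of incompressibility to force $\partial N$ to be a $2$--sphere, apply Sch\"onflies, and iterate forward; you merely spell out the last step (that forward iterates of the cell $N$ give a cellular neighbourhood basis) which the paper leaves implicit. The direction (1)$\Rightarrow$(2) is where you genuinely diverge. The paper passes to $\mathbb{S}^3$, shows that the dual repeller $K'$ is also cellular (reusing the argument from Theorem~\ref{thm:decomp}), and then exhausts $\mathcal{A}(K)\setminus K=\mathbb{S}^3\setminus(K\cup K')$ by the simply connected sets $\mathbb{S}^3\setminus(B_n\cup B'_n)$, where $\{B_n\}$ and $\{B'_n\}$ are cell bases of $K$ and $K'$. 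You instead avoid the dual repeller altogether: you take a single loop $\gamma$, use the attraction of compacta to push $f^n(\gamma)$ into a shell $N\setminus{\rm int}\ N'$ between two nested cells, kill it there by the same van Kampen computation (a free product with trivial amalgamation is trivial iff both factors are), and pull the nullhomotopy back with $f^{-n}$. The underlying topological fact is identical in both proofs --- the region between two nested bicollared $3$--cells is simply connected --- but your version is more self-contained, since it does not require establishing cellularity of $K'$, at the price of leaning on the dynamics (attraction of compacta and invariance of $\mathcal{A}(K)\setminus K$) for each individual loop rather than once and for all via an exhaustion. Both routes are sound; your choice of the small cell $N'$ disjoint from the compact set $f^n(\gamma)$ is justified exactly as you say.
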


\begin{proof}
(1) $\Rightarrow$ (2). Suppose that $K$ is cellular and let $K'$ be its dual repeller in $\mathbb{S}^3$ after extending $f$ leaving $\infty$ fixed. Reasoning as in the proof of Theorem~\ref{thm:decomp} it follows that $K'$ is also cellular. Let $\{B_n\}$ and $\{B'_n\}$ be nested bases of neighbourhoods comprised by polyhedral cells of $K$ and $K'$ respectively. For each $n$, the Seifert--van Kampen theorem ensures that $\mathbb{S}^3\setminus (B_n\cup B'_n)$ is simply connected. The simple connectivity of $\mathcal{A}(K)\setminus K$ follows from the fact that any loop in $\mathcal{A}(K)\setminus K$ is contained in  $\mathbb{S}^3\setminus (B_n\cup B'_n)$ for $n$ sufficiently large since
\[
\mathcal{A}(K)\setminus K=\mathbb{S}^3\setminus (K\cup K')=\bigcup_{n}\left(\mathbb{S}^3\setminus (B_n\cup B'_n)\right)
\]
and $\mathbb{S}^3\setminus (B_n\cup B'_n)\subseteq\mathbb{S}^3\setminus (B_{n+1}\cup B'_{n+1})$ for each $n$.

(2) $\Rightarrow$ (1). Suppose now that $\mathcal{A}(K)\setminus K$ is simply connected. By Lemma~\ref{lem:incompressible} $K$ has a neighbourhood $N\subseteq\mathcal{A}(K)$ that is a $3$-manifold such that $\partial N$ is connected and incompressible in $\mathcal{A}(K) \backslash K$. The simple connectivity of $\mathcal{A}(K)\setminus K$ ensures that $\partial N$ must be homeomorphic to a sphere since otherwise its fundamental group (which would be nontrivial) would inject into $\pi_1(\mathcal{A}(K)\setminus K)=0$ leading to a contradiction. Then the Sch\"{o}enflies theorem for bicollared spheres ensures that $N$ is homeomorphic to a ball and, since $N\subseteq\mathcal{A}(K)$, we get that $K$ is cellular.
\end{proof}

\begin{remark}
 Using strong results from the theory of upper-semicontinuous decompositions it can be proved that an attractor $K\subseteq\mathbb{R}^3$ is cellular if and only if $\mathcal{A}(K)\setminus K$ is homeomorphic to $\mathbb{S}^2\times\mathbb{R}$. Indeed, since both $K$ and $K'$ are cellular, \cite[Theorem~2, pg. 23]{dav} together with \cite[Corollary~2A, pg. 36]{dav} ensure that $\mathcal{A}(K)\setminus K=\mathbb{S}^3\setminus (K\cup K')$ is homeomorphic to $\mathbb{S}^3\setminus \{p,q\}$ where $p,q\in\mathbb{S}^3$ and, hence, to the product $\mathbb{S}^2\times\mathbb{R}$. The converse follows from Corollary~\ref{coro:cell} since $\mathbb{S}^2\times\mathbb{R}$ is simply connected.
\end{remark}

The following result can be regarded as a generalization of Theorem~\ref{lem:unknotted}.

\begin{theorem}\label{thm:nbhinc}
Let $K\subseteq\mathbb{R}^3$ be an attractor for a homeomorphism. Suppose that $K$ is connected, does not separate $\mathbb{R}^3$ and has a neighbourhood $N\subseteq\mathcal{A}(K)$ that is a compact $3$-manifold such that $\partial N$ is incompressible in $\mathbb{S}^3\setminus{\rm int}\;  N$. Then $K$ is an attractor for a flow. 
\end{theorem}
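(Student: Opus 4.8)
The plan is to follow the architecture of the proof of Theorem~\ref{lem:unknotted}, with the neighbourhood $N$ playing the role of the knotted solid torus. As in Subsection~\ref{subsec:plapprox} I first normalize: after replacing $f$ by a suitable power I may assume $N$ is positively invariant, $f(N)\subseteq{\rm int}\ N$, and by the piecewise linear approximation quoted there I may assume $f$ is piecewise linear off $K$ while keeping $N$ positively invariant; the hypothesis that $\partial N$ is incompressible in $\mathbb{S}^3\setminus{\rm int}\ N$ only involves $N$ itself and is untouched. Writing $M:=\mathbb{S}^3\setminus{\rm int}\ N$ (a compact $3$-manifold containing $\infty$) and $N_k:=f^{-k}(N)$, the backward iterates form a nested family whose boundaries $S_k:=f^{-k}(\partial N)=\partial N_k$ are pairwise disjoint closed surfaces in ${\rm int}\ M$, and each $S_k$ splits $M$ into the outer piece $M_k:=\mathbb{S}^3\setminus{\rm int}\ N_k$ and the inner shell $A_k:=N_k\setminus{\rm int}\ N$.

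The heart of the argument is to show that every $S_k$ is incompressible in $M$; once this is available the rest is routine. Since $M=M_k\cup_{S_k}A_k$, the Seifert--van Kampen theorem identifies $\pi_1(M)$ with the amalgamated product $\pi_1(M_k)*_{\pi_1(S_k)}\pi_1(A_k)$ as soon as $\pi_1(S_k)$ injects into each factor, and then the amalgamating subgroup $\pi_1(S_k)$ injects into $\pi_1(M)$; by the loop theorem characterization of Subsection~\ref{subsec:3mfds} this is exactly incompressibility of $S_k$ in $M$. Now $f^{-k}$ carries $(M,\partial N)$ onto $(M_k,S_k)$, so incompressibility of $\partial N$ in $M$ (the hypothesis) yields injectivity of $\pi_1(S_k)\to\pi_1(M_k)$ for free; and it carries $(N\setminus{\rm int}\ f^k(N),\partial N)$ onto $(A_k,S_k)$, so injectivity of $\pi_1(S_k)\to\pi_1(A_k)$ amounts to $\partial N$ being incompressible in $N\setminus{\rm int}\ f^k(N)$. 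Since $K\subseteq{\rm int}\ f^k(N)$ we have $N\setminus{\rm int}\ f^k(N)\subseteq N\setminus K$, so this is implied \emph{for every $k$ at once} provided $\partial N$ is incompressible in $N\setminus K$.

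Thus before running the main argument I would arrange, via the construction in Lemma~\ref{lem:incompressible}, that $\partial N$ is connected and incompressible on the inner side as well, i.e. in $N\setminus K$. Compressing $\partial N$ inside $\mathcal{A}(K)\setminus K$ can only proceed along disks lying in $N\setminus K$ (inner compressions), because any outer compressing disk would lie in $\mathcal{A}(K)\setminus{\rm int}\ N\subseteq\mathbb{S}^3\setminus{\rm int}\ N$ and is forbidden by hypothesis. This process terminates; if it ever produces a $2$-sphere, then the component containing $K$ is a ball (Schoenflies), $K$ is cellular, and $K$ is already an attractor for a flow by \cite{garay1}, so we are done. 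Otherwise we reach a connected $\partial N$ incompressible in $N\setminus K$, which combined with the hypothesis makes $\partial N$ incompressible in $\mathbb{S}^3\setminus K$. I expect the \emph{main obstacle} to be checking that these inner compressions do not destroy incompressibility in $\mathbb{S}^3\setminus{\rm int}\ N$ (and that the boundary can be taken connected): passing from $N$ to a compressed $N'\subseteq N$ enlarges the outer region by gluing the discarded piece onto $M$, and one must rule out any new outer compressing disk for $\partial N'$ by a standard innermost-disk and outermost-arc surgery against that disk, reducing a hypothetical compression of $\partial N'$ to one of $\partial N$ in $M$.

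With all $S_k$ incompressible in $M$, I apply the Kneser--Haken theorem (Theorem~\ref{thm:kneserhaken}) to $S_1,\dots,S_{n(M)+1}$: two of them are parallel, and since the family is linearly nested the only complementary components cobounded by two members are the shells $N_{i+1}\setminus{\rm int}\ N_i$, so the parallel pair is consecutive and $N_{i+1}\setminus{\rm int}\ N_i\cong(\partial N)\times[0,1]$ for some $i$. Applying $f^{i+1}$ gives $N\setminus{\rm int}\ f(N)\cong(\partial N)\times[0,1]$, i.e. $N$ and $f(N)$ are concentric. Stacking the forward iterates $f^j(N)\setminus{\rm int}\ f^{j+1}(N)\cong(\partial N)\times[0,1]$ along their common boundaries and using $\bigcap_j f^j(N)=K$ yields a product structure $N\setminus K\cong(\partial N)\times[0,+\infty)$. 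By the discussion in Subsection~\ref{subsec:dynamics} this realizes $K$ as an attractor for a flow.
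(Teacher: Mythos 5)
Your proposal is correct and follows essentially the same route as the paper: compress $\partial N$ maximally inside $N\setminus K$ (the hypothesis forbids outer compressing disks), conclude that the resulting connected boundary is incompressible in $\mathbb{S}^3\setminus K$, and then run the backward-iteration/Kneser--Haken argument of Theorem~\ref{lem:unknotted} to find a product shell. The step you flag as the main obstacle --- that the inner compressions do not create new compressing disks on the outside --- is exactly the point the paper also dispatches in one sentence, and your per-$k$ Seifert--van Kampen verification of incompressibility of the $S_k$ is just a more laborious equivalent of the paper's observation that incompressibility in the invariant set $\mathbb{S}^3\setminus K$ is preserved by all iterates and restricts to $\mathbb{S}^3\setminus{\rm int}\,P$.
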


\begin{proof} As discussed in Subsection \ref{subsec:plapprox} we can replace $f$ with a homeomorphism which is piecewise linear on $\mathbb{R}^3 \setminus K$ and still has $K$ as an attractor with $N$ contained in its basin of attraction.

{\it Claim.} $K$ has a neighbourhood $P\subseteq\mathcal{A}(K)$ that is a compact $3$-manifold and such that $\partial P$ is connected and incompressible in $\mathbb{S}^3\setminus K$.

{\it Proof of claim.} If $\partial N$ is compressible in $\mathbb{S}^3\setminus K$ the assumption ensures that every compressing disk for $\partial N$ is contained in $N\setminus K$. Reasoning as in the proof of Lemma~\ref{lem:incompressible}, after compressing $\partial N$ as much as possible in $N\setminus K$, we can find a neighbourhood $P\subseteq N\subseteq\mathcal{A}(K)$ of $K$ that is a compact $3$-manifold and such that $\partial P$ is connected and incompressible in $P\setminus K$. However, since $P$ has been obtained by cutting $N$ along $\partial N$, that is incompressible in $\mathbb{S}^3\setminus{\rm int}\; N$ by assumption, it follows that $\partial P$ is incompressible in $\mathbb{S}^3\setminus{\rm int}\; P$. Thereofore, $\partial P$ is incompressible in $\mathbb{S}^3\setminus K$ as claimed. $_{\blacksquare}$

Since $\mathbb{S}^3\setminus K$ is invariant and $\partial P$ incompressible in $\mathbb{S}^3\setminus K$ it follows all its iterates are also incompressible in $\mathbb{S}^3\setminus K$. Maybe after replacing $f$ by a suitable power we have that $P\subseteq {\rm int}\; f^{-1}(P)$. Then the family $S_k:=f^{-k}(\partial P)$ is an infinite family of disjoint connected incompressible surfaces in the compact manifold $\mathbb{S}^3\setminus{\rm int}\; P$ and the result follows along the same lines of the proof of Theorem~\ref{lem:unknotted} as a consequence of Theorem~\ref{thm:kneserhaken}.
\end{proof}

To motivate the next result consider again for a moment the case when $K$ is a knotted toroidal attractor for a homeomorphism $f$. Let $T_0$ be a solid torus neighbourhood of $K$ in $\mathcal{A}(K)$. As in the proof of Theorem \ref{lem:unknotted} its boundary $\partial T_0$ is incompressible both in $T_0$ and in $\mathbb{S}^3 \setminus {\rm int}\ T_0$. Then writing $\mathbb{S}^3 \setminus K = (\mathbb{S}^3 \setminus {\rm int}\ T_0) \cup (T_0  \setminus K)$ shows, by the Seifert-van Kampen theorem, that the inclusion induced homomorphism $\pi_1(T_0 \setminus K) \longrightarrow \pi_1(\mathbb{S}^3 \setminus K)$ is injective. The same is then true of every inclusion induced map $\pi_1(f^{-r}(T_0) \setminus K) \longrightarrow \pi_1(\mathbb{S}^3 \setminus K)$ because $f^{-r}(\mathbb{S}^3 \setminus K) = \mathbb{S}^3 \setminus K$. Since any loop in $\mathcal{A}(K) \setminus K$ is contained in a set of the form $f^{-r}(T_0) \setminus K$ for large enough $r$, it follows that the inclusion induced map $\pi_1(\mathcal{A}(K) \setminus K) \longrightarrow \pi_1(\mathbb{S}^3 \setminus K)$ is injective as well. The following theorem shows that this algebraic condition, which as we have just seen holds for a knotted toroidal attractor, is in fact enough to reach the same conclusion as in the preceding theorem:

\begin{theorem}\label{thm:inject}
Let $K\subseteq\mathbb{R}^3$ be an attractor for a homeomorphism. Suppose that $K$ that does not separate $\mathbb{R}^3$ and the homomorphism that $i_*:\pi_1(\mathcal{A}(K)\setminus K)\longrightarrow\pi_1(\mathbb{R}^3\setminus K)$ induced by the inclusion is injective.
Then $K$ is an attractor for a flow.
\end{theorem}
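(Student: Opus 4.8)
The plan is to reduce the situation to the hypotheses of Theorem~\ref{thm:nbhinc} by upgrading the incompressibility furnished by Lemma~\ref{lem:incompressible} using the injectivity assumption on $i_*$. First I would reduce to the case where $K$ is connected, via the discussion at the beginning of this section (passing to a connected component and a suitable power of $f$), which is needed because both Lemma~\ref{lem:incompressible} and Theorem~\ref{thm:nbhinc} are stated for connected $K$. I would then invoke Lemma~\ref{lem:incompressible} to obtain a compact $3$--manifold neighbourhood $N\subseteq\mathcal{A}(K)$ of $K$ whose boundary $\partial N$ is connected and incompressible in $\mathcal{A}(K)\setminus K$. If $\partial N$ happens to be a $2$--sphere then $N$ is a ball by the Sch\"onflies theorem, so $K$ is cellular and we are done by Garay's realization result \cite{garay1}; thus I may assume $\partial N$ is not a sphere.

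The heart of the argument is to promote the incompressibility of $\partial N$ in $\mathcal{A}(K)\setminus K$ to incompressibility in $\mathbb{S}^3\setminus{\rm int}\, N$, which is exactly what Theorem~\ref{thm:nbhinc} requires. By the loop theorem characterization of incompressibility recalled in Subsection~\ref{subsec:3mfds} (valid for the two--sided surface $\partial N$ inside the, possibly non-compact, manifold $\mathcal{A}(K)\setminus K$), incompressibility of $\partial N$ in $\mathcal{A}(K)\setminus K$ means that the inclusion induced homomorphism $\pi_1(\partial N)\longrightarrow\pi_1(\mathcal{A}(K)\setminus K)$ is injective. Composing with the injective homomorphism $i_*$ of the hypothesis, and using that deleting the point $\infty$ does not change the fundamental group so that $\pi_1(\mathbb{R}^3\setminus K)=\pi_1(\mathbb{S}^3\setminus K)$, I conclude that the inclusion induced homomorphism $\pi_1(\partial N)\longrightarrow\pi_1(\mathbb{S}^3\setminus K)$ is injective.

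The key observation is now purely formal: since $K\subseteq{\rm int}\, N$, the inclusion $\partial N\hookrightarrow\mathbb{S}^3\setminus K$ factors through $\mathbb{S}^3\setminus{\rm int}\, N$, so the injective map above factors as
\[
\pi_1(\partial N)\longrightarrow\pi_1(\mathbb{S}^3\setminus{\rm int}\, N)\longrightarrow\pi_1(\mathbb{S}^3\setminus K).
\]
Because the composite is injective, the first arrow is injective as well. Hence $\partial N$ is incompressible in $\mathbb{S}^3\setminus{\rm int}\, N$: a compressing disk there would exhibit a nontrivial element of $\pi_1(\partial N)$ that dies in $\pi_1(\mathbb{S}^3\setminus{\rm int}\, N)$ and therefore in $\pi_1(\mathbb{S}^3\setminus K)$, contradicting this injectivity. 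With $N\subseteq\mathcal{A}(K)$ a compact $3$--manifold neighbourhood of $K$ whose boundary is incompressible in $\mathbb{S}^3\setminus{\rm int}\, N$, all the hypotheses of Theorem~\ref{thm:nbhinc} are satisfied, and that theorem yields that $K$ is an attractor for a flow.

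The step I expect to be the main obstacle is the connectedness reduction at the very start: one must check that the injectivity of $i_*$ for $K$ descends to the analogous statement for the relevant connected component, since the sets $\mathcal{A}(K)\setminus K$ and $\mathbb{R}^3\setminus K$ change when one passes to a component and deletes the other components. The remainder of the proof is a clean chain of inclusion induced maps whose only genuine input is the loop theorem, packaged in the incompressibility characterization of Subsection~\ref{subsec:3mfds}; some minor care is also warranted in applying that characterization to the non-compact manifold $\mathcal{A}(K)\setminus K$, although the loop theorem holds for arbitrary $3$--manifolds.
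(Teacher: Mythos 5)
Your proof is correct and follows essentially the same route as the paper's: take the neighbourhood $N$ from Lemma~\ref{lem:incompressible}, compose the injection $\pi_1(\partial N)\to\pi_1(\mathcal{A}(K)\setminus K)$ with the hypothesized injective $i_*$ to conclude that $\partial N$ is incompressible in $\mathbb{S}^3\setminus K$ and hence in $\mathbb{S}^3\setminus{\rm int}\, N$, and finish by Theorem~\ref{thm:nbhinc}. The preliminary reduction to connected $K$ and the $2$--sphere case, which you treat explicitly, are handled the same way (or left implicit) in the paper.
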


\begin{proof}
Let $N\subseteq\mathcal{A}(K)$ be a neighbourhood of $K$ that is a compact $3$-manifold such that $\partial N$ is connected and incompressible in $\mathcal{A}(K)\setminus K$ that exists by Lemma~\ref{lem:incompressible}. If $\partial N$ is a $2$-sphere then $N$ must be a ball and $K$ must be cellular and, hence, an attractor for a flow. Assume that $\partial N$ is not a $2$-sphere. Let $j:\partial N\hookrightarrow \mathcal{A}(K)\setminus K$ and $k:\partial N\hookrightarrow \mathcal{A}(K)\setminus K$ be the inclusion maps. The incompressibility of $\partial N$ in $\mathcal{A}(K)\setminus K$ is equivalent to the injectivity of the induced homomorphism $j_*:\pi_1(\partial N)\longrightarrow \pi_1(\mathcal{A}(K)\setminus K)$. This fact together with the assumption on the injectivity of $i_*$ give the injectivity of the homorphism $k_*$, being $k_*$ the composition of the two. This last injectivity is equivalent to the incompressibility  of $\partial N$ in $\mathbb{S}^3\setminus K$, which implies the incompressibility of $\partial N$ in $\mathbb{S}^3\setminus{\rm int}\; N$. The result follows from Theorem~\ref{thm:nbhinc}.
\end{proof}

Our last theorem is an appropriate generalization of (1) $\Leftrightarrow$ (4) of Theorem \ref{thm:charact}. Notice that we now require that $\pi_1(\mathcal{A}(K) \setminus K)$ be finitely generated, rather than $\pi_1(\mathbb{S}^3 \setminus K)$ as in Theorem \ref{thm:charact}. This was to be expected, since we already discussed that the Whitehead continuum has $\pi_1(\mathbb{S}^3 \setminus K)$ finitely generated (in fact, trivial) but is not an attractor for a flow.

\begin{theorem} Let $K \subseteq \mathbb{R}^3$ be a connected, nonseparating attractor for a homeomorphism. The following are equivalent:
\begin{itemize}
	\item[(1)] $K$ can be realized as an attractor for a flow (with the same region of attraction).
	\item[(2)] $\pi_1(\mathcal{A}(K)\setminus K)$ is finitely generated.
\end{itemize}
\end{theorem}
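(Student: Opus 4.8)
The plan is to prove the two implications separately, following the template of (4)$\Rightarrow$(1) in Theorem \ref{thm:charact} but carried out inside the region of attraction rather than inside $\mathbb{S}^3\setminus K$. The forward implication is essentially formal. Suppose $K$ is realized as a flow attractor with basin $\mathcal{A}(K)$, and pick as in Subsection \ref{subsec:dynamics} a compact positively invariant neighbourhood $P\subseteq\mathcal{A}(K)$ whose boundary is transverse to the flow. Flowing forward and backward, every point of $\mathcal{A}(K)\setminus K$ meets $\partial P$ exactly once, so the flow provides a homeomorphism $\mathcal{A}(K)\setminus K\cong(\partial P)\times\mathbb{R}$. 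Hence $\mathcal{A}(K)\setminus K$ strong deformation retracts onto the compact surface $\partial P$, whose fundamental group is finitely generated; so $\pi_1(\mathcal{A}(K)\setminus K)$ is finitely generated too. (Since $K$ is connected and nonseparating, $\mathcal{A}(K)\setminus K$ is connected and $\partial P$ is a single surface.)

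For (2)$\Rightarrow$(1) I would first invoke Lemma \ref{lem:incompressible} to obtain a compact $3$-manifold neighbourhood $N\subseteq\mathcal{A}(K)$ with $\partial N$ connected and incompressible in $\mathcal{A}(K)\setminus K$. If $\partial N$ is a $2$-sphere then $N$ is a ball, $K$ is cellular, and cellular attractors are attractors for a flow with the same region of attraction by \cite{garay1} together with \cite[Theorem 45]{mio5}; so assume $\partial N$ is not a sphere. After replacing $f$ by a power we may take $f(N)\subseteq{\rm int}\ N$, so $N_k:={f^{-k}(N)}$ is an increasing sequence with $\bigcup_k N_k=\mathcal{A}(K)$; set $S_k:=\partial N_k=f^{-k}(\partial N)$, $Q_k:=N_k\setminus K$ and $R_{k+1}:=N_{k+1}\setminus{\rm int}\ N_k$. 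Because $\partial N$ is incompressible in the invariant set $\mathcal{A}(K)\setminus K$ and $f^{-k}$ is a self-homeomorphism of it, each $S_k$ is incompressible in $\mathcal{A}(K)\setminus K$, hence in the submanifolds $Q_k$ and $R_{k+1}$ that it bounds. The Seifert--van Kampen theorem then presents $\pi_1(Q_{k+1})$ as the amalgamated product $\pi_1(Q_k)*_{\pi_1(S_k)}\pi_1(R_{k+1})$ with inclusion-induced natural maps, so $\pi_1(Q_k)\hookrightarrow\pi_1(Q_{k+1})$ and $\pi_1(\mathcal{A}(K)\setminus K)=\varinjlim\pi_1(Q_k)$ is the ascending union of the images $G_k$ of these groups.

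Now I would run the finiteness argument exactly as in Theorem \ref{thm:charact}. Finite generation of $\pi_1(\mathcal{A}(K)\setminus K)$ forces $G_k=G_{k+1}=\cdots$ for large $k$, so $\pi_1(Q_k)\to\pi_1(Q_{k+1})$ is onto for large $k$; by the algebraic observation in Subsection \ref{subsec:prelim_algebra} the amalgamating map $\pi_1(S_k)\to\pi_1(R_{k+1})$ is then surjective, and being injective it is an isomorphism. Since $f^{-k}$ carries the pair $(S_0,R_1)$ homeomorphically onto $(S_k,R_{k+1})$, the map $\pi_1(S_0)\to\pi_1(R_1)$ is conjugate to $\pi_1(S_k)\to\pi_1(R_{k+1})$ and hence is an isomorphism as well. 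Triangulating the compact $3$-manifold $R_1$ (possible by Moise, so that the loop-theorem characterization of Subsection \ref{subsec:3mfds} and Brown's theorem apply even though $f$ is only a homeomorphism), a theorem of Brown (\cite[Theorem 3.4]{brown3}) yields $R_1\cong(\partial N)\times[0,1]$; that is, $S_0$ and $S_1$ are parallel. Translating by powers of $f$ makes every $R_k$ a product, so the $S_k$ form a concentric family and, exactly as in Subsection \ref{subsec:toratt}, both $N\setminus K$ and $\mathcal{A}(K)\setminus{\rm int}\ N$ acquire product structures; gluing them along $\partial N$ gives $\mathcal{A}(K)\setminus K\cong(\partial N)\times\mathbb{R}$, and the flow construction of Subsection \ref{subsec:dynamics} realizes $K$ as an attractor whose basin is exactly $\mathcal{A}(K)$.

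The main obstacle I anticipate is the interplay between the dynamics and piecewise linearity. Unlike in Theorem \ref{thm:charact}, I must not alter $\mathcal{A}(K)$ (the conclusion demands the same region of attraction), so I cannot replace $f$ by a map that is piecewise linear outside $K$, and must keep $f$ a general homeomorphism. The point to argue carefully is therefore that the $3$-manifold results may be applied to each compact piece $R_k$ after triangulating it individually, the homeomorphisms $f^{\pm k}$ being used only to transport the homotopy-theoretic data between pieces. A secondary technical point is the assembly of the infinitely many product collars $R_k$ into a single half-open product $(\partial N)\times[0,\infty)$: this is the same concatenation-of-concentric-collars issue already implicit in Subsection \ref{subsec:toratt}, which I would settle by the standard untwisting of successive collars using the gluing homeomorphisms induced by $f$.
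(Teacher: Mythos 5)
Your proof is correct and follows essentially the same route as the paper's: the incompressible-boundary neighbourhood from Lemma \ref{lem:incompressible}, the direct-limit argument showing that finite generation forces the inclusion-induced maps to be eventually surjective, the amalgamated-product normal-form observation of Subsection \ref{subsec:prelim_algebra} to conclude that $\pi_1(\partial N)\to\pi_1(D)$ is an isomorphism, and Brown's theorem to get the product structure. The only immaterial difference is your choice of exhaustion: you use the noncompact pieces $Q_k=f^{-k}(N)\setminus K$, while the paper works with the compact symmetric unions $F_r=f^{-r}(D)\cup\cdots\cup f^{r}(D)$ of copies of $D=f^{-1}(N)\setminus{\rm int}\ N$.
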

\begin{proof} (1) $\Rightarrow$ (2). As mentioned in Subsection \ref{subsec:dynamics} $K$ has a positively invariant neighbourhood $P \subseteq \mathcal{A}(K)$ which is a compact $3$--manifold and such that the flow is transverse to $\partial P$. Then the flow provides a homeomorphism $\mathcal{A}(K) \setminus K \cong (\partial P) \times \mathbb{R}$, and so $\pi_1(P \setminus K) = \pi_1(\mathcal{A}(K) \setminus K) = \pi_1(\partial P)$ which is finitely generated because $\partial P$ is a compact surface.

(2) $\Rightarrow$ (1). Let $N \subseteq \mathcal{A}(K)$ be a neighbourhood of $K$ as in Lemma \ref{lem:incompressible}. Namely, $N$ is a compact polyhedral $3$--manifold with $\partial N$ connected and incompressible in $\mathcal{A}(K) \setminus K$. In particular the inclusion induced map $\pi_1(\partial N) \longrightarrow \pi_1(\mathcal{A}(K) \setminus K)$ is injective. Replacing $f$ with a suitable power we may assume that $N$ is positively invariant. Since $f$ is a homeomorphism of $\mathcal{A}(K)\setminus K$, not only the inclusion of $\partial N$ in $\mathcal{A}(K) \setminus K$ but also of all its iterates induces injective homomorphisms on the fundamental groups. Of course then the same is true of the inclusion of any iterate of $\partial N$ in any subset of $\mathcal{A}(K) \setminus K$.

Let us set $D := f^{-1}(N) \setminus {\rm int}\ N$, which is a compact and connected $3$--manifold whose boundary consists of two copies of $\partial N$. For each $r \geq 0$ we consider $F_r := f^{-r}(D) \cup f^{-r+1}(D) \cup \ldots \cup f^{r-1}(D) \cup f^{r}(D)$. Observe that $F_{r+1}$ is the union of $F_r$ and two copies of $D$ (namely $f^{r+1}(D)$ and $f^{-(r+1)}(D)$) pasted onto $F_r$ along its two boundary components. Since the latter are just iterates of $\partial N$, the previous paragraph implies that their inclusions in $F_r$, $f^{r+1}(D)$ and $f^{-(r+1)}(D)$ all induce injective homomorphisms on the fundamental group. All this implies, via the Seifert-van Kampen theorem, that the inclusion $F_r \subseteq F_{r+1}$ induces an injective map $\pi_1(F_r) \longrightarrow \pi_1(F_{r+1})$.

The rest of the argument goes as in the case of toroidal sets. Since $\mathcal{A}(K)\setminus K$ is the union of the $F_r$, its fundamental group is the direct limit of $\pi_1(F_r)$ bounded by the (injective) inclusion induced maps. The assumption that $\mathcal{A}(K)\setminus K$ be finitely generated implies that these maps must be isomorphisms for big enough $r$, and in turn by the algebraic observation of Subsection \ref{subsec:prelim_algebra} the inclusion $\partial N \subseteq D$ must induce an isomorphism in $\pi_1$, and so $D$ must be a product. Thus $N$ and $f^{-1}(N)$ are concentric and we are finished.
\end{proof}




\bibliographystyle{plain}

\bibliography{biblio}

\end{document}